\def \F {{\mathbb F}}
\def \Q {{\mathbb Q}}
\def \Z {{\mathbb Z}}
\def\tr{\mathop{\rm tr}\nolimits}
\def\frakp{{\mathfrak{p}}}
\def \d {\,{\rm d}}
\def\GL{\hbox{{\rm GL}}}
\def\Li{\hbox{{\rm Li}}}
\def\leq{\leqslant}
\def\geq{\geqslant}
\def\le{\leqslant}
\def\ge{\geqslant}
\theoremstyle{plain}
\newtheorem{theorem}{Theorem}[section]
\newtheorem{lemma}[theorem]{Lemma}
\newtheorem{corollary}[theorem]{Corollary}
\newtheorem{hypothesis}[theorem]{Hypothesis}
\newtheorem{proposition}[theorem]{Proposition}
\theoremstyle{remark}
\newtheorem{remark}{Remark}
\numberwithin{equation}{section}
\begin{document}

\title[Pseudoprime reductions of Elliptic curves]
{Pseudoprime reductions of Elliptic curves}
\author{C. David \& J. Wu}
\address{Department of Mathematics and Statistics,
Concordia University,
1455 de Mai\-sonneuve West,
Montr\'eal, QC, H3G 1M8, Canada}
\email{cdavid@mathstat.concordia.ca}

\address{Institut Elie Cartan Nancy
\\
CNRS, Universit\'e Henri Poincar\'e (Nancy 1), INRIA
\\
Boulevard des Aiguillettes, B.P. 239
\\
54506 Van\-d\oe uvre-l\`es-Nancy
\\
France}
\email{wujie@iecn.u-nancy.fr}

\date{\today}

\begin{abstract}
Let $E$ be an elliptic curve over $\F_p$ without complex multiplication, and for each prime
$p$ of good reduction, let $n_E(p) = \left| E(\F_p) \right|$. Let $Q_{E,b}(x)$ be the number of primes $p \leq x$
such that $b^{n_E(p)} \equiv b\,({\rm mod}\,n_E(p))$, and $\pi_{E, b}^{\rm pseu}(x)$ be the number of
{\it compositive} $n_E(p)$ such that $b^{n_E(p)} \equiv b\,({\rm mod}\,n_E(p))$ (also called
elliptic curve pseudoprimes). Motivated by cryptography applications, we address in this paper
the problem of finding upper bounds for $Q_{E,b}(x)$ and $\pi_{E, b}^{\rm pseu}(x)$,
generalising some of the literature for the classical pseudoprimes \cite{Erdos56, Pomerance81} to this
new setting.
\end{abstract}
\subjclass[2000]{11N36, 14H52}
\keywords{Rosser-Iwaniec's sieve, group order of elliptic curves over finite fields, pseudoprimes.}
\maketitle


\section{Introduction}

\smallskip

The study of the structure and size of the group of points
of elliptic curves over finite fields has received much attention since Koblitz and Miller independently  proposed in 1985
elliptic curve cryptography,
an approach to public-key cryptography based on the algebraic structure of elliptic curves over finite fields.
Those cryptosystems guarantee, in general,
a high level of security with less cost in the size of the keys, whenever
the order of the group has a big prime divisor.

Let $E$ be an elliptic curve defined over $\Q$ with conductor $N_E$ and without complex multiplication (CM),
and denote by $E(\F_p)$ the reduction of $E$ modulo $p$.
Writing $n_E(p):=|E(\F_p)|$,
it is an interesting problem to study the asymptotic behavior of
\begin{equation}\label{defpiEtwinx}
\pi_E^{\rm twin}(x)
:= \big|\big\{p\le x \, : \, n_E(p)\;\hbox{is prime}\big\}\big|.
\end{equation}
Here and in the sequel, the letters $p$, $q$ and $\ell$ denote prime numbers.
Koblitz \cite{Ko88} conjectured that as $x\to\infty$,
\begin{equation}\label{KoblitzC}
\pi_E^{\rm twin}(x)
\sim \frac{C_E^{\rm twin}x}{(\log x)^2},
\end{equation}
with an explicit constant $C_E^{\rm twin}$ depending only on $E$
(see \cite[(2.5)]{DW10} for its precise definition).
It is easy to see that if $C_E^{\rm twin}=0$, then $\pi_E^{\rm twin}(x)\ll_E 1$ for all $x\ge 1$.
The asymptotic formula \eqref{KoblitzC} can be regarded as the analogue of the twin prime conjecture for elliptic curves.
As in the classical case, Koblitz's conjecture is still open,
but was shown to be true on average over all elliptic curves
\cite{BCD08}. One can also apply sieve methods to get unconditional or conditional upper bounds
for $\pi_E^{\rm twin}(x)$.
The best unconditional upper bound is due to Zywina \cite[Theorem 1.3]{zywina-largesieve}, and the
best bound under the Generalised Riemann Hypothesis (GRH) is due to David \& Wu \cite[Theorem 2]{DW10}.
For $E$ an elliptic curve over $\Q$ without CM, and
for any $\varepsilon>0$, those bounds are
\begin{equation}\label{UB.piEtwinx}
\pi_E^{\rm twin}(x)
\le \begin{cases}
\displaystyle
(24C_E^{\rm twin}+\varepsilon) \frac{x}{(\log x)\log_2x} & \text{(unconditionally)},
\\\noalign{\vskip 2mm}
\displaystyle
(10C_E^{\rm twin}+\varepsilon) \frac{x}{(\log x)^2}          & \text{(under the GRH)},
\end{cases}
\end{equation}
where $\log_k$ denotes the $k$-fold logarithm function.

Let $b\ge 2$ be an integer.
We say that a composite positive integer $n$ is a pseudoprime to base $b$
if the congruence
\begin{equation}\label{qnq}
b^n\equiv b\,({\rm mod}\,n)
\end{equation}
holds.
In practice, primality testing algorithms are not fast when one wants 
to test many numbers in a short amount of time,
and
pseudoprime testing can provide a quick pre-selection procedure to get rid of most of the pretenders.
The distribution of pseudoprimes was studied by many authors, including \cite{Erdos56, Pomerance81}.
Motivated by applications in cryptography,
the question of the distribution of pseudoprimes in certain sequences of positive integers has received some interest
(see \cite{CLS09, GoPo91, LuSh06, LuSh07, PoRo80}).
In particular
Cojocaru, Luca \& Shparlinski \cite{CLS09} have investigated
distribution of pseudoprimes in $\{n_E(p)\}_{p\,\text{primes}}$.
Define
$$
Q_{E, b}(x)
:= \big|\big\{p\le x \, : \, b^{n_E(p)}\equiv b\,({\rm mod}\,n_E(p))\big\}\big|.
$$
According to Fermat's little theorem, if $n_E(p)$ is a prime such that $n_E(p)\nmid b$,
then \eqref{qnq} holds with $n=n_E(p)$. Thus
\begin{equation}\label{}
\pi_E^{\rm twin}(x)
\le Q_{E, b}(x)
\end{equation}
for all $x\ge 2$.
Cojocaru, Luca \& Shparlinski \cite[Theorems 1 and 2]{CLS09} proved that
for any fixed base $b\ge 2$ and elliptic curve $E$ without CM,
the estimates
\begin{equation}\label{CLS}
Q_{E, b}(x)\ll_{E, b}
\begin{cases}
\displaystyle
\frac{x(\log_3x)^2}{(\log x)\log_2x}   &  \text{(unconditionally)}
\\\noalign{\vskip 2mm}
\displaystyle
\frac{x(\log_2x)^2}{(\log x)^2}            &  \text{(under the GRH)}
\end{cases}
\end{equation}
hold for all $x\ge 10$, where the implied constant depends on $E$ and $b$.
\footnote{We noticed that there are two inaccuracies in Cojocaru, Luca \& Shparlinski's proof of \eqref{CLS}:
With the notation of \cite{CLS09},
we have $t_b(\ell)\mid (n_E(p)-1)$ instead of $t_b(\ell)\mid n_E(p)$ (see \cite[page 519]{CLS09}).
Thus the inequality (see \cite[page 520]{CLS09})
$$
\#{\mathcal T}\le \sum_{y<\ell\le z} \Pi(x; \ell\rho(t_b(\ell)))
$$
does not hold.
Secondly the statements of Lemmas 3, 4, 6 and 7 of \cite{CLS09} are not true when $(m, M_E) \neq 1$
(see Section \ref{CDT} for the definition of $M_E$).
Then, the proofs of Lemma 9 and 10 hold only for $(m, M_E) = 1$.
This is not sufficient for the proof bounding $\#{\mathcal T}$ since $t_b(\ell)$ is not necessarily coprime with $M_E$.}

\smallskip

The first aim of this paper is to improve \eqref{CLS}.

\begin{theorem}\label{UB.Pseudoprime.thm}
Let $E$ be an elliptic curve over $\Q$ without CM and $b\ge 2$ be an integer.
For any $\varepsilon>0$, we have
\begin{equation}\label{UB.QEbx}
Q_{E, b}(x)
\le \begin{cases}
\displaystyle
(48{\rm e}^\gamma + \varepsilon) \frac{x\log_3x}{(\log x)\log_2x} & \text{$($unconditionally$)$}
\\\noalign{\vskip 2mm}
\displaystyle
(28{\rm e}^\gamma + \varepsilon) \frac{x\log_2x}{(\log x)^2}          & \text{$($under the GRH$)$}
\end{cases}
\end{equation}
for all $x\ge x_0(E, b, \varepsilon)$,
where $\gamma$ is the Euler constant.
\end{theorem}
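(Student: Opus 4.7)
The plan is to split
$$
Q_{E,b}(x)\leq \pi_E^{\rm twin}(x)+\pi_{E,b}^{\rm pseu}(x)+O_b(1),
$$
where $\pi_{E,b}^{\rm pseu}(x)$ counts the primes $p\leq x$ for which $n_E(p)$ is \emph{composite} and satisfies \eqref{qnq} (the $O_b(1)$ absorbs the finitely many $p$ with $n_E(p)\mid b$). Since \eqref{UB.piEtwinx} already controls the twin-prime term, the task reduces to proving a bound of the same shape for $\pi_{E,b}^{\rm pseu}(x)$. For any composite $n_E(p)$ there is a prime divisor $\ell\leq\sqrt{n_E(p)}\leq\sqrt{x+2\sqrt x}$; for $\ell\nmid b$ the pseudoprime congruence $b^{n_E(p)-1}\equiv 1\pmod\ell$ yields $t_b(\ell)\mid n_E(p)-1=p-a_p$, and combined with $\ell\mid p+1-a_p$ this places $p$ in a Chebotarev class in $\Gal(L_\ell/\Q)$, where $L_\ell:=\Q(E[\ell])\cdot\Q(E[t_b(\ell)])$ captures both divisibility conditions.

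The argument then combines two estimates at a sieve cut-off $z$ to be optimised. First, the Rosser--Iwaniec upper-bound sieve (as in \cite{DW10,zywina-largesieve}) bounds the number of $p\leq x$ for which $n_E(p)$ is free of prime factors $\leq z$; this covers every pseudoprime $n_E(p)$ whose smallest prime factor exceeds $z$, contributing the same shape as $\pi_E^{\rm twin}(x)$. Second, for each prime $\ell\leq z$ one bounds
$$
N_\ell(x):=\bigl|\{p\leq x:\ell\mid n_E(p),\ t_b(\ell)\mid p-a_p\}\bigr|
$$
by effective Chebotarev on $L_\ell$ under GRH, or by Brun--Titchmarsh applied to the induced residue class modulo $\ell\cdot t_b(\ell)$ unconditionally. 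In both cases the relative density of the admissible conjugacy classes is controlled via Serre's uniform open-image theorem, including at the exceptional primes $\ell\mid M_E$---precisely the place where the proofs in \cite[Lemmas 3--7]{CLS09} are incomplete.

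Summing gives a bound of the form $\sum_{\ell\leq z}N_\ell(x)\ll (x/\log x)\cdot S(z)$ with $S(z)$ essentially $\sum_{\ell\leq z}1/(\ell\,t_b(\ell))$. Splitting at a secondary parameter $\eta$, the contribution from $\ell$ with $t_b(\ell)\geq(\ell-1)/\eta$ is handled by Mertens, while the contribution from $\ell$ with $t_b(\ell)<(\ell-1)/\eta$ uses the Pomerance-type estimate $\omega(b^d-1)\ll d$ from \cite{Pomerance81} applied to $d\leq z/\eta$ (with Hooley's refinement under GRH). Balancing yields $S(z)\ll \log_3 x/\log_2 x$ unconditionally and $S(z)\ll \log_2 x/\log x$ under GRH, and combining with the sieve piece produces the stated rates in \eqref{UB.QEbx}; the explicit constants $48e^\gamma$ and $28e^\gamma$ emerge from the sieve constants $24$ and $10$ of \eqref{UB.piEtwinx}, amplified by a factor of two from splitting a composite $n_E(p)$ at either end, together with the Mertens product at the sieve endpoint (which supplies the $e^\gamma$). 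The principal obstacles are the uniform Galois-theoretic control of $N_\ell(x)$ for $\ell\mid M_E$---repairing the gap in \cite{CLS09}---and the quantitative treatment of the tail of $S(z)$ over primes with abnormally small index $(\ell-1)/t_b(\ell)$, which is exactly what improves the unconditional factor $(\log_3 x)^2$ of \cite{CLS09} to $\log_3 x$ here.
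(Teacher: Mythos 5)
Your high-level outline shares several ingredients with the paper — the use of the pseudoprime congruence to force $t_b(\ell)\mid n_E(p)-1$ alongside $\ell\mid n_E(p)$, a Chebotarev count in a compositum of division fields, and a Rosser--Iwaniec sieve for the complementary range — but the decomposition as you have written it contains a fatal flaw, and the stated source of the constants is not correct.

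The gap is in the choice of sieve window. You sieve out \emph{all} primes $\leq z$ and then sum $N_\ell(x)$ over all primes $\ell\leq z$ with $\ell\mid n_E(p)$. But the small primes in that sum contribute a fixed positive amount: $\sum_{\ell\leq z}1/(\ell\,t_b(\ell))$ starts at $\ell=2$, where $t_b(2)=1$ for odd $b$, so the sum is bounded below by a positive constant independent of $x$, and in fact converges to a positive constant as $z\to\infty$. Consequently $\sum_{\ell\leq z}N_\ell(x)\gg\Li(x)$, which is the trivial bound $\pi(x)$, not the claimed $x\log_3 x/((\log x)\log_2 x)$. Your secondary $\eta$-split of $\sum_\ell 1/(\ell\,t_b(\ell))$ does not rescue this, because the Mertens piece contributes $O(\eta)$ and the Pomerance piece contributes $O(1)$ (a convergent but nonzero series), so $S(z)$ cannot be $o(1)$. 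The paper's essential device is to sieve only the \emph{window} $[y,z)$ with $y$ slowly growing (e.g.\ $y=(\log_2 x)^2\log_3 x$ unconditionally, $y=(\log x)^2\log_2 x$ under GRH): one shows $\sum_{y\leq\ell<z}1/(\ell\,{\rm ord}_\ell(b))\ll y^{-1/2}$ (their Lemma~\ref{MulOrder}), so the Chebotarev piece $|\mathcal T|\ll\Li(x)\,(\log_2 z)\,y^{-1/2}$ is lower order, at the cost of worsening the sieve piece by a factor $\log y$ via $V_y(z)\sim(\log y)/(\log z)$.

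Your account of the constants is also off: $48\,{\rm e}^\gamma$ and $28\,{\rm e}^\gamma$ do not arise from the constants $24$ and $10$ of \eqref{UB.piEtwinx} doubled by ``splitting a composite at either end''. They come from $F(s)=2{\rm e}^\gamma/s$ together with $V_y(z)\sim(\log y)/(\log z)$: unconditionally $\log y\sim 2\log_3 x$ and $\log z\sim\tfrac1{24}\log_2 x$ (the $1/24$ being forced by the sieve remainder needing $d<z^2\leq(\log x)^{1/12}/\log_2 x$ so that the unconditional Chebotarev error is negligible), giving $2\cdot 24=48$; under GRH, $\log y\sim 2\log_2 x$ and $\log z\sim\tfrac1{14}\log x$ (forced by the error term $z^7 x^{1/2}$ in the $\mathcal T$-estimate), giving $2\cdot 14=28$. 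Note that $28\neq 2\cdot 10$, so the proposed link to $10\,C_E^{\rm twin}$ cannot be right. In short: introduce the lower cut-off $y$, use the tail estimate $\sum_{\ell\geq y}1/(\ell\,{\rm ord}_\ell(b))\ll y^{-1/2}$ to control the Chebotarev sum, and rederive the constants from the sieve with $V_y(z)$ — then the argument goes through.
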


Denoting by $\pi(x)$ the number of primes not exceeding $x$, and by
$\pi_b^{\rm pseu}(x)$ the number of pseudoprimes to base $b$
not exceeding $x$,
then it is known that (see \cite{Erdos56, Pomerance81})
\begin{equation}\label{ErdosPomerance}
\pi_b^{\rm pseu}(x)=o(\pi(x))
\end{equation}
as $x\to\infty$.
Precisely Pomerance \cite[Theorem 2]{Pomerance81} proved that
\footnote{In \cite{Pomerance81}, the definition of pseudoprime to base $b$ is slightly stronger:
$b^{n-1}\equiv 1 \, ({\rm mod}\,n)$ in place of $b^n\equiv b\,({\rm mod}\,n)$.
It is easy to adapt Pomerance's proof of \cite[Theorem 2]{Pomerance81} to obtain \eqref{Pomerance1},
as we do in this paper for the context of elliptic curves pseudoprimes. See Section \ref{Section-adapt-Pomerance} for more details.}
\begin{equation}\label{Pomerance1}
\pi_b^{\rm pseu}(x)\le \frac{x}{\sqrt{L(x)}}
\end{equation}
for $x\ge x_0(b)$, where
\begin{equation}\label{defLx}
L(x) := {\rm e}^{(\log x)(\log_3x)/\log_2x}.
\end{equation}
As analogue of $\pi_b^{\rm pseu}(x)$ for elliptic curve, we introduce
$$
\pi_{E, b}^{\rm pseu}(x)
:= \big|\big\{p\le x \, : \, \text{$n_E(p)$ is pseudoprime to base $b$}\big\}\big|.
$$
Clearly
$$
Q_{E, b}(x)
= \pi_E^{\rm twin}(x) + \pi_{E, b}^{\rm pseu}(x).
$$
In view of \eqref{ErdosPomerance}, it seems reasonable to conjecture
\begin{equation}\label{PseuTwin}
\pi_{E, b}^{\rm pseu}(x)
= o\big(\pi_E^{\rm twin}(x)\big)
\end{equation}
as $x\to\infty$.

In order to establish analogue of \eqref{Pomerance1} for $\pi_{E, b}^{\rm pseu}(x)$,
we need a supplementary hypothesis.

\begin{hypothesis}\label{H}
Let $E$ be an elliptic curve over $\Q$.
There is a positive constant $\delta$ such that
\begin{equation}\label{H.delta}
M_E(n):=\sum_{p\le x, \, n_E(p)=n} 1\ll_E n^{\delta}
\end{equation}
holds uniformly for $n\ge 1$ and $x\ge 1$,
where the implied constant can depend on the elliptic curve $E$.
\end{hypothesis}

By the Hasse bound $|p+1-n_E(p)|\le 2\sqrt{p}$, it is easy to see that
\begin{equation}\label{Hasse1}
n_E(p)/16\le p\le 16n_E(p)
\end{equation}
for all $p$.
Thus the relation $n_E(p)=n$ and the Hasse bound imply that $|p-n|\le 9\sqrt{n}$.
Therefore \eqref{H.delta} holds trivially with $\delta=\tfrac{1}{2}$ and an absolute implicit constant.
It is conjectured that \eqref{H.delta} should hold for any $\delta>0$ (see \cite[Question 4.11]{Kowalski06}).
Kowalski proved that this conjecture is true for elliptic curves with CM \cite[Proposition 5.3]{Kowalski06}
and on average for elliptic curves without CM \cite[Lemma 4.10]{Kowalski06}.

The next theorem shows that we can obtain a better conditional upper bound for $\pi_{E, b}^{\rm pseu}(x)$
than $\pi_E^{\rm twin}(x)$,
which can be regarded as analogue of \eqref{Pomerance1} for elliptic curves without CM.

\begin{theorem}\label{UB.Pseudoprime.NonCM}
Let $E$ be an elliptic curve over $\Q$ without CM and $b\ge 2$ be an integer.
If we assume the GRH and Hypothesis \ref{H} with $\delta<\tfrac{1}{24}$,
we have
\begin{equation}\label{UB.piEbpseux.NonCM}
\pi_{E, b}^{\rm pseu}(x)
\le \frac{x}{L(x)^{1/40}}
\end{equation}
for all $x\ge x_0(E, b, \delta)$.
\end{theorem}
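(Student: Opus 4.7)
The plan is to adapt Pomerance's proof of \eqref{Pomerance1} to the elliptic curve setting, combining it with the GRH Chebotarev framework used for Theorem \ref{UB.Pseudoprime.thm}. The key structural input is: for every $p \le x$ with $n := n_E(p)$ pseudoprime to base $b$ and every prime $q \mid n$ coprime to $b$, the congruence $b^n \equiv b \pmod{q}$ forces $t_q := \operatorname{ord}_q(b) \mid \gcd(n-1, q-1)$. Taking $q = P^+(n)$ and writing $n = q m$, this yields $m \equiv 1 \pmod{t_q}$, so $n$ is pinned to a thin residue class modulo $q\, t_q$. This congruence restriction, absent in Theorem \ref{UB.Pseudoprime.thm}, is the source of the extra saving.

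I would split the count according to whether $P^+(n_E(p)) \le y$ or $> y$, with threshold $y = L(x)^{\alpha}$ for a small $\alpha > 0$ to be optimized. For the \emph{smooth} case, Hasse's bound \eqref{Hasse1} gives $n \le 16x$, and Hypothesis \ref{H} yields
\begin{equation*}
\bigl|\{p \le x : n_E(p) \text{ pseudoprime},\ P^+(n_E(p)) \le y\}\bigr|
\le \sum_{\substack{n \le 16x \\ P^+(n) \le y \\ n \text{ pseudoprime}}} M_E(n; x)
\ll x^\delta\, \Psi_{\rm ps}(16x, y),
\end{equation*}
where $\Psi_{\rm ps}(X, y)$ counts $y$-smooth pseudoprimes to base $b$ not exceeding $X$. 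Adapting the smooth-$n$ portion of Pomerance's proof of \eqref{Pomerance1} (as indicated in Section \ref{Section-adapt-Pomerance}) gives $\Psi_{\rm ps}(X, y) \ll X/L(X)^{\eta(\alpha)}$ with $\eta(\alpha) \to \infty$ as $\alpha \to 0$. Since $\delta < 1/24$, one can fix $\alpha$ so that $\eta(\alpha) - \delta \ge 1/40$, delivering an $O(x/L(x)^{1/40})$ bound for this case.

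For the \emph{rough} case ($q := P^+(n) > y$), I would apply the Chebotarev density theorem under GRH to the compositum of the $q$-division field $\Q(E[q])$ (of degree $\sim q^4$) and the abelian field encoding the congruence $n_E(p) \equiv q \pmod{q\, t_q}$. This produces
\begin{equation*}
\bigl|\{p \le x : q \mid n_E(p),\ n_E(p)/q \equiv 1 \pmod{t_q}\}\bigr|
\ll \frac{x}{q\, t_q \log x} + O\!\left(\sqrt{x}\, \log(q t_q x N_E)\right),
\end{equation*}
valid for $q$ up to a small positive power of $x$ (roughly $x^{1/4 - \varepsilon}$). The complementary regime $q > x^{1/4 - \varepsilon}$ is handled separately: for each admissible $m \equiv 1 \pmod{t_q}$ with $qm \le 16x$, Hasse confines $p$ to a short interval around $qm$, so a Brun--Titchmarsh bound followed by summation over $m$ and $q$ gives a negligible contribution. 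Summing the Chebotarev term over $q > y$ and invoking the Hooley-style GRH lower bounds on $\sum_q (q t_q)^{-1}$ then yields the matching $O(x/L(x)^{1/40})$.

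The main obstacle will be the simultaneous control of these two cases: the Chebotarev error $\sqrt{x}\log(qx)$ must stay below the main term when summed over $q > y$, constraining $\alpha$ from below, while $\alpha$ must also be small enough for $\eta(\alpha) - \delta \ge 1/40$ in the smooth estimate. Balancing these competing constraints, together with properly handling the very-large-$q$ regime via Hasse, is what produces both the hypothesis $\delta < 1/24$ and the final exponent $1/40$.
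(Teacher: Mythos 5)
Your two-case split by the size of $P^+(n_E(p))$, with threshold $y=L(x)^\alpha$, does not survive closer inspection; the paper uses a four-way decomposition (essentially Pomerance's) that avoids the difficulty you run into, and your ``smooth'' case contains a fatal loss.

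The critical flaw is in the smooth case. You bound it by $x^\delta\,\Psi_{\rm ps}(16x,y)$ after applying Hypothesis \ref{H} term by term, and then claim $\Psi_{\rm ps}(X,y)\ll X/L(X)^{\eta(\alpha)}$ suffices once $\eta(\alpha)-\delta\ge 1/40$. But $L(x)=x^{o(1)}$, so $x^{\delta}$ is never absorbed by any $L(x)^{-\eta}$: the product $x^{\delta}\cdot x/L(x)^{\eta(\alpha)}$ is $\gg x^{1+\delta-o(1)}$, not $\ll x$. Indeed with $y=L(x)^{\alpha}$ one computes $\log X/\log y\asymp \log_2 X/\log_3 X$ and hence $\Psi(X,y)\asymp X(\log X)^{-1/\alpha}$, and even Pomerance's pseudoprime restriction only improves this to a bound of the shape $X/L(X)^{c}$, which is nowhere near the required $X^{1-\delta}$. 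The paper circumvents this entirely in its case $S_4$: since $n_E(p)>x/L(x)$ with all prime factors $\le L(x)^3$, after peeling off $n'_E(p)\mid b^\infty$ (which is handled crudely via $\delta=1/2$) the remaining $b$-coprime part has a divisor $d$ in a fixed power-of-$x$ window $(x^{1/18},x^{1/17}]$; one then applies Theorem \ref{thm-upperbound}(iii) to the congruence $n_E(p)\equiv r_{b,d}\,({\rm mod}\,d\,{\rm ord}_d(b))$ with modulus $\le x^{2/17}$, a pure Chebotarev bound with no $x^\delta$ factor, and sums via Pomerance's Theorem 1. That is the ingredient your smooth case is missing.

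The secondary gap is in your treatment of the very-large-$q$ regime. Brun--Titchmarsh in a Hasse interval around $qm$ gives $\ll\sqrt{qm}/\log x$ primes; summing this over $m\le 16x/q$ with $m\equiv 1\,({\rm mod}\,t_q)$ yields $\ll x^{3/2}/(q\,t_q\log x)$, and then summing over $q>x^{1/4-\varepsilon}$ gives something of size $\gg x^{3/2-1/8}/\log x$, which is useless. In fact the whole reason Hypothesis \ref{H} with $\delta<1/24$ is in the statement is precisely to handle the range where the modulus $q\,{\rm ord}_q(b)$ (or $\ell\,{\rm ord}_\ell(b)$ in case $S_3$) exceeds the Chebotarev window $x^{1/8}/\log x$: there one uses $M_E(n)\ll n^\delta$ together with \eqref{MulOrder2}, getting a contribution $\ll x^{1+\delta-1/24}\log x$, and this forces $\delta<1/24$. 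Attributing the $\delta<1/24$ constraint to an optimization against the smooth case (as you do at the end) is not where it actually comes from. Also note that the split must be by the size of the modulus $q\,t_q$, not by $q$ alone, since $t_q$ can be as large as $q-1$; your claimed Chebotarev range ``$q$ up to roughly $x^{1/4-\varepsilon}$'' is not correct when $t_q$ is large.
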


In view of Koblitz's conjecture \eqref{KoblitzC},
the result of Theorem \ref{UB.Pseudoprime.NonCM} then encourages our belief in Conjecture \eqref{PseuTwin}.

By combining \eqref{UB.piEbpseux.NonCM} and  the second part of \eqref{UB.piEtwinx},
we immediately get the following result.

\begin{corollary}\label{QEbx.NonCM}
Let $E$ be an elliptic curve over $\Q$ without CM and $b\ge 2$ be an integer.
If we assume the GRH and hypothesis \ref{H} with $\delta<\tfrac{1}{24}$,
for any $\varepsilon>0$ we have
\begin{equation}\label{UB.QEbx.NonCM}
Q_{E, b}(x)
\le (10C_E^{\rm twin}+\varepsilon) \frac{x}{(\log x)^2}
\end{equation}
for all $x\ge x_0(E, b, \delta, \varepsilon)$.
\end{corollary}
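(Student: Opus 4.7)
The plan is to use the decomposition
$$
Q_{E, b}(x) = \pi_E^{\rm twin}(x) + \pi_{E, b}^{\rm pseu}(x),
$$
already pointed out in the introduction, and to bound each summand separately with the two results at our disposal. Given $\varepsilon>0$, I would first apply the GRH half of \eqref{UB.piEtwinx} with the smaller tolerance $\varepsilon/2$ to obtain
$$
\pi_E^{\rm twin}(x)\le \bigl(10 C_E^{\rm twin}+\tfrac{\varepsilon}{2}\bigr)\frac{x}{(\log x)^2}
$$
for $x$ sufficiently large in terms of $E$ and $\varepsilon$. Next, since the assumptions of Theorem~\ref{UB.Pseudoprime.NonCM} are exactly those we are working under, I would invoke it to get
$$
\pi_{E, b}^{\rm pseu}(x)\le \frac{x}{L(x)^{1/40}}
$$
for $x\ge x_0(E,b,\delta)$.

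It then remains to absorb the pseudoprime contribution into the $\varepsilon/2$ slack. For this I would compare exponents: with $L(x)={\rm e}^{(\log x)(\log_3x)/\log_2x}$,
$$
\log\!\left(\frac{L(x)^{1/40}}{(\log x)^2}\right)
= \frac{(\log x)(\log_3x)}{40\log_2x} - 2\log_2x\longrightarrow +\infty
$$
as $x\to\infty$, since $\log x$ dwarfs any fixed power of $\log_2x$. Consequently $L(x)^{-1/40}\le \varepsilon/(2(\log x)^2)$ for all $x$ beyond some threshold $x_0(E,b,\delta,\varepsilon)$, which upgrades the bound on $\pi_{E, b}^{\rm pseu}(x)$ to $(\varepsilon/2)x/(\log x)^2$. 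Adding the two estimates yields the desired inequality \eqref{UB.QEbx.NonCM}.

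There is no real obstacle here: Theorem~\ref{UB.Pseudoprime.NonCM} has been tailored to give a bound that beats any power of $\log x$, precisely so that it is negligible compared with the conditional Koblitz-type bound for $\pi_E^{\rm twin}(x)$. The only small care required is to split $\varepsilon$ into two halves so that both error terms fit under the same constant $10C_E^{\rm twin}+\varepsilon$, and to choose the threshold $x_0(E,b,\delta,\varepsilon)$ as the maximum of the two thresholds coming from \eqref{UB.piEtwinx} and \eqref{UB.piEbpseux.NonCM} together with the one coming from the asymptotic comparison above.
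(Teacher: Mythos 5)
Your argument matches the paper's own, which simply combines the GRH bound in \eqref{UB.piEtwinx} for $\pi_E^{\rm twin}(x)$ with Theorem~\ref{UB.Pseudoprime.NonCM} for $\pi_{E,b}^{\rm pseu}(x)$ via the decomposition $Q_{E,b}(x)=\pi_E^{\rm twin}(x)+\pi_{E,b}^{\rm pseu}(x)$. The explicit asymptotic comparison you give to absorb the $x/L(x)^{1/40}$ term into the $\varepsilon$ slack is exactly the small verification the paper leaves implicit.
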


We can also consider the same problem for elliptic curves with CM.
In this case, we easily obtain an unconditional result
by using the bound \eqref{Pomerance1} of Pomerance for pseudoprimes and
a result of Kowalski \cite{Kowalski06} about the second moment of $M_E(n)$ for elliptic curves with CM.

\begin{theorem}\label{UB.Pseudoprime.CM}
Let $E$ be an elliptic curve over $\Q$ with CM and $b\ge 2$ be an integer.
Then we have
\begin{equation}\label{UB.piEbpseux.CM}
\pi_{E, b}^{\rm pseu}(x)
\le \frac{x}{L(x)^{1/4}}
\end{equation}
for all $x\ge x_0(E, b)$.
\end{theorem}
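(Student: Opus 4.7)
The plan is to combine Pomerance's bound on classical pseudoprimes with a second-moment estimate for $M_E(n)$ that is unconditionally available in the CM case, via Cauchy--Schwarz.

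First, the Hasse bound \eqref{Hasse1} ensures $n_E(p) \leq 16x$ for all $p \leq x$. Grouping primes by the value $n := n_E(p)$, we may write
\begin{equation*}
\pi_{E,b}^{\rm pseu}(x) = \sum_{\substack{n \leq 16x \\ n \text{ pseudoprime to base } b}} M_E(n),
\end{equation*}
and Cauchy--Schwarz combined with Pomerance's bound \eqref{Pomerance1} (in its ``$b^n \equiv b$'' version) yields
\begin{equation*}
\pi_{E,b}^{\rm pseu}(x)^2 \leq \pi_b^{\rm pseu}(16x) \cdot \sum_{n \leq 16x} M_E(n)^2 \leq \frac{16x}{L(16x)^{1/2}} \cdot \sum_{n \leq 16x} M_E(n)^2.
\end{equation*}

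To control the second moment of $M_E(n)$, I would invoke complex multiplication. If $E$ has CM by an order in an imaginary quadratic field $K$, Deuring's theory supplies the identity $n_E(p) = N_{K/\Q}(1 - \pi_p)$ whenever $p$ is a prime of ordinary reduction, where $\pi_p \in \mathcal{O}_K$ is the Frobenius. Consequently $M_E(n)$ is bounded, up to units, by the number of $\alpha \in \mathcal{O}_K$ with $N(\alpha) = n$; this is the input used in Kowalski \cite[Proposition 5.3]{Kowalski06}. Combining it with the classical second-moment estimate for the representation-as-norm function in an imaginary quadratic field (which in turn is controlled by $\sum_{n\leq y} d(n)^2 \ll y(\log y)^3$), I would obtain
\begin{equation*}
\sum_{n \leq 16x} M_E(n)^2 \ll_E x (\log x)^3.
\end{equation*}

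Putting the two inputs together gives $\pi_{E,b}^{\rm pseu}(x) \ll_E x (\log x)^{3/2}/L(x)^{1/4}$; since $(\log x)^{3/2} = L(x)^{o(1)}$, the polylog factor is absorbed into the exponent of $L(x)$ (together with the slack in Pomerance's bound \eqref{Pomerance1}), yielding \eqref{UB.piEbpseux.CM} for $x \geq x_0(E,b)$. The main obstacle is precisely this second-moment bound on $M_E(n)$: without the CM description of the Frobenius, no such bound is unconditionally available for a fixed curve, which is why the non-CM statement of Theorem \ref{UB.Pseudoprime.NonCM} requires both GRH and Hypothesis \ref{H}, whereas in the present CM situation everything is unconditional.
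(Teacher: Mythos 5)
Your overall strategy is exactly the paper's: group primes by the value $n_E(p)=n$, bound $\pi_{E,b}^{\rm pseu}(x)$ by a sum of $M_E(n)$ over pseudoprimes $n\le O(x)$, and split by Cauchy--Schwarz into Pomerance's count of pseudoprimes times a second moment of $M_E(n)$. The gap is entirely in the second-moment input, and it is a real one: bounding $M_E(n)$ pointwise by the norm representation function $r_K(n)$ of the CM field and then crudely by $d(n)$ yields $\sum_{n\le x} M_E(n)^2 \ll x(\log x)^3$ (and even the sharp version of that route only gives $\ll x\log x$). Plugging this into Cauchy--Schwarz produces a genuinely \emph{growing} polylogarithmic factor, namely $\pi_{E,b}^{\rm pseu}(x)\ll x(\log x)^{3/2}/L(x)^{1/4}$, which is \emph{not} $\le x/L(x)^{1/4}$; your remark that the polylog can be absorbed ``into the exponent of $L(x)$'' only yields the weaker conclusion $\pi_{E,b}^{\rm pseu}(x)\le x/L(x)^{1/4-\varepsilon}$, and Pomerance's bound \eqref{Pomerance1} as quoted has no spare $\log$-power savings to absorb the constant.

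The paper avoids this by invoking Kowalski's \emph{second-moment} estimate for CM curves, $\sum_{n\le 4x}M_E(n)^2 \ll x/(\log x)^{1-\varepsilon}$ (from \cite[Theorem 5.4, Proposition 7.4]{Kowalski06}), which has a $\log$-power \emph{saving} rather than a $\log$-power loss. That saving is what absorbs the implied constants and gives the stated exponent $1/4$. The reference you cite, \cite[Proposition 5.3]{Kowalski06}, is the pointwise bound $M_E(n)\ll_\varepsilon n^\varepsilon$ --- this is the CM analogue of Hypothesis \ref{H}, but it is the wrong tool here: the strength of Kowalski's second-moment result comes from the sparsity of the set $\{n_E(p)\}$, which is lost if you first pass to the pointwise bound $M_E(n)\ll r_K(n)$, since $r_K$ is supported on a positive-density set. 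So you need to cite and use the second-moment theorem directly, not derive it from the pointwise bound.
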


It seems be interesting to prove that
\begin{equation}\label{LB.piEpseu.LTC.CM}
\pi_{E, b}^{\rm pseu}(x)\to\infty,
\quad\text{as $x\to\infty$.}
\end{equation}
We hope to come back to this question in the future.

\vskip 2mm

{\bf Acknowledgments}.
This first author was supported by the Natural Sciences and Engineering Research Council
of Canada (Discovery Grant 155635-2008) and by a grant to the Institute for Advanced Study
from the Minerva Research Foundation during the academic year 2009-2010.
The second author wishes to thank the Centre de Recherches Math\'ematiques (CRM) in Montr\'eal
for hospitality and support during the preparation of this article.

\vskip 10mm

\section{Chebotarev density theorem}
\label{CDT}

In order to prove Theorems \ref{UB.Pseudoprime.thm} and \ref{UB.Pseudoprime.NonCM},
we need to know some information on the distribution of  the sequence $\{n_E(p)\}_{p\,\text{primes}}$
in arithmetic progressions.
The aim of this section is to give such results with the help of the Chebotarev density theorem.
Our main result of this section is Theorem \ref{thm-upperbound} below.

We conserve all notation of \cite[Sections 2 and 3]{DW10}.
In particular, for an elliptic curve $E$ without complex multiplication defined over the rationals,
let $E[n]$ be the group of $n$-torsion points of $E$, and let
let $L_n$ be the field extension obtained from $\Q$
by adding the coordinates of the $n$-torsion points of $E$. This is a Galois extension of $\Q$,
and we denote $G(n):=\mbox{Gal}(L_n/\Q)$. Since $E[n](\bar{\Q}) \simeq \Z/n \Z \times \Z/n \Z$,
choosing a basis for the $n$-torsion and looking at the
action of the Galois automorphisms on the $n$-torsion, we get an
injective homomorphism
$$
\rho_n : G(n) \hookrightarrow \mbox{GL}_2(\Z/n \Z).
$$
If $p \nmid nN_E$, then $p$ is unramified in $L_n/\Q.$ Let $p$ be an
unramified prime, and let $\sigma_p$ be the Artin symbol of $L_n/\Q$
at the prime $p$.  For such a prime $p$, $\rho_{n}(\sigma_p)$ is a
conjugacy class of matrices of $\mbox{GL}_2(\Z/n \Z)$. Since the
Frobenius endomorphism $(x,y) \mapsto (x^p, y^p)$ of $E$ over $\F_p$
satisfies the polynomial $x^2 - a_E(p) x + p$,
it is not difficult to see that
$$
\tr(\rho_{n}(\sigma_p)) \equiv a_E(p)\,({\rm mod}\,n) \qquad{\rm
and}\qquad \det(\rho_{n}(\sigma_p)) \equiv p\,({\rm mod}\,n).
$$
To study the sequence $\left\{ n_E(p) \right\}_{p\, {\rm primes}}$, we will use
the Chebotarev Density Theorem to count the number of primes $p$ such that
$$
n_E(p)
= p + 1 - a_E(p)
\equiv \det(\rho_{n}(\sigma_p)) + 1 - \tr(\rho_{n}(\sigma_p))
\equiv r \,({\rm mod}\,n)
$$
for integers $r, n$ with $n \geq 2$. We then define
$$C_r(n) = \left\{ g \in G(n) \, : \,
\det(g)+1-\tr(g)\equiv r\,({\rm mod}\,n) \right\}.$$
Then, the $C_r(n)$ are unions of conjugacy classes in $G(n)$.
We also denote $C(n) := C_0(n)$.
For any prime $\ell$ such that $(\ell, M_E) = 1$, $G(\ell) = \mbox{GL}_2(\Z/\ell\Z)$,
and it is easy to compute that
\begin{equation}\label{SizeCrell}
| C_r(\ell) | =  \begin{cases}
\displaystyle \ell(\ell^2 - 2)  & \mbox{for $r \equiv 0\,({\rm mod}\,\ell)$}
\\\noalign{\vskip 1,5mm}
 \displaystyle \ell(\ell^2 - \ell - 1)   & \mbox{for $r \equiv 1\,({\rm mod}\,\ell)$}
\\\noalign{\vskip 1,5mm}
\displaystyle \ell(\ell^2 - \ell - 2)  & \mbox{for $r \not\equiv 0,1\,({\rm mod}\,\ell)$}
\end{cases}
\end{equation}
and then
\begin{equation}\label{SizeCrelloverGell}
\frac{| C_r(\ell) |}{|G(\ell)|} =  \begin{cases}
\displaystyle \frac{\ell^2 - 2}{(\ell-1)^2 (\ell+1)}  & \mbox{for $r \equiv 0\,({\rm mod}\,\ell)$}
\\\noalign{\vskip 1,5mm}
 \displaystyle\frac{\ell^2 - \ell - 1}{(\ell-1)^2 (\ell+1)}  & \mbox{for $r \equiv 1\,({\rm mod}\,\ell)$}
\\\noalign{\vskip 1,5mm}
\displaystyle \frac{\ell^2 - \ell - 2}{(\ell-1)^2 (\ell+1)}  & \mbox{for $r \not\equiv 0,1\,({\rm mod}\,\ell)$}.
\end{cases}
\end{equation}

It was shown by Serre \cite{Se72} that the Galois groups $G(n) \subseteq \mbox{GL}_2(\Z/n\Z)$
are large, and that there exists a
positive integer $M_E$ depending only on the elliptic curve $E$ such that
\begin{eqnarray}
\label{serre1} &&\mbox{If $(n, M_E)=1$, then $G(n) =
\mbox{GL}_2(\Z/n \Z)$;}
\\
\label{serre2} &&\mbox{If $(n, M_E)=(n,m)=1$, then $G(mn) \simeq
G(m) \times G(n)$;}
\\
\label{serre3} && \mbox{If $M_E \mid m$, then $G(m) \subseteq \mbox{GL}_2(\Z/m \Z)$
is the
full inverse image of} \\ \nonumber
&& \mbox{$G(M_E) \subseteq \mbox{GL}_2(\Z/M_E \Z)$ under the projection  map.}
\end{eqnarray}

Let
\begin{eqnarray*}
\pi_{C_r(n)}(x, L_n/\Q)
:= \left|\left\{ p \leq x : p \nmid nN_E \;\;\mbox{and}\;\; \rho_n(\sigma_p) \in  C_r(n) \right\}\right|.
\end{eqnarray*}
The following proposition (with a better error term) was proved in \cite[Theorem 3.9]{DW10} for the conjugacy class $C(n) = C_0(n) \subseteq G(n)$
when $n$ is squarefree, and can be easily generalised to general $n$ and
$r$.

\begin{proposition}\label{CDT-bothcases}
Let $E$ be an elliptic curve over $\Q$ without CM. Let $r\ge 0$ be an integer,
and let $n=dm$ be any positive integer with $(d, M_E)=1$ and $m\mid {M_E}^\infty$.
\footnote{The notation $d\mid n^\infty$ means that $p\mid d\,\Rightarrow\, p\mid n$
and the notation $p^k\| n$ means that $p^k\mid n$ and $p^{k+1}\nmid n$.}
\par
{\rm (i)}
Then,
$$
\pi_{C_r(n)}(x, L_{n}/\Q)
=  \frac{|C_r(m)|}{|G(m)|}  \bigg( \prod_{\ell^k\| d} \frac{|C_r(\ell^k)|}{|\GL_2(\Z / \ell^k \Z)|}  \bigg) \Li(x)
+ O_E\Big( x \exp\Big\{\!- A n^{-2} \! \sqrt{\log{x}}\Big\}\!\Big)
$$
uniformly for $\log{x} \gg n^{12} \log{n}$,
where the implied constants depend only on the elliptic curve $E$
and $A$ is a positive absolute constant.
\par
{\rm (ii)}
Assuming the GRH for the Dedekind zeta functions of the number fields $L_n/\Q$,
we have
$$
\pi_{C_r(n)}(x, L_{n}/\Q)
=  \frac{|C_r(m)|}{|G(m)|} \bigg( \prod_{\ell^k\| d} \frac{|C_r(\ell^k)|}{|\GL_2(\Z / \ell^k \Z)|}  \bigg) \Li(x)
+ O_E\left(n^{3}x^{1/2} \log{(n  x)}\right).
$$
\end{proposition}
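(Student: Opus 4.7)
The proof should follow the same strategy as the proof of \cite[Theorem 3.9]{DW10}, where the case $r=0$ (and $n$ squarefree) was treated. The only genuine extensions needed are to verify that the factorization of the conjugacy class is compatible with the splitting of $G(n)$, and that nothing in the Chebotarev bookkeeping uses the squarefree hypothesis in an essential way.

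\medskip

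\noindent\textbf{Step 1 (Factorization of $G(n)$).}
Since $(d,M_E)=1$ and $m\mid M_E^\infty$, we have $(d,m)=1$. Serre's property \eqref{serre2} gives $G(n)=G(dm)\simeq G(d)\times G(m)$, while \eqref{serre1} gives $G(d)=\GL_2(\Z/d\Z)$. The Chinese Remainder Theorem then yields
$$
G(d)\;\simeq\;\prod_{\ell^k\|d}\GL_2(\Z/\ell^k\Z).
$$

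\medskip

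\noindent\textbf{Step 2 (Factorization of $C_r(n)$).}
Under the isomorphism of Step 1, if $g\in G(n)$ corresponds to $(g_d,g_m)\in G(d)\times G(m)$, then $\det(g)$ and $\tr(g)$ reduce mod $d$ to $\det(g_d)$ and $\tr(g_d)$, and similarly mod $m$. By CRT, the congruence $\det(g)+1-\tr(g)\equiv r\,({\rm mod}\,n)$ is equivalent to the pair of congruences mod $d$ and mod $m$. Hence $C_r(n)$ corresponds to $C_r(d)\times C_r(m)$, and the same argument on $G(d)$ gives
$$
\frac{|C_r(n)|}{|G(n)|}
=\frac{|C_r(m)|}{|G(m)|}\prod_{\ell^k\|d}\frac{|C_r(\ell^k)|}{|\GL_2(\Z/\ell^k\Z)|}.
$$
This is the factor appearing in the claimed main term.

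\medskip

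\noindent\textbf{Step 3 (Effective Chebotarev).}
I would then apply effective versions of the Chebotarev density theorem to the Galois extension $L_n/\Q$ with conjugacy-stable set $C_r(n)\subseteq G(n)$. For this one needs to control
$$
[L_n:\Q]=|G(n)|\ll_E n^4,
\qquad
\log|{\rm disc}(L_n/\Q)|\ll_E n^4\log(nN_E),
$$
which follow from Serre's open-image theorem together with the fact that the primes ramified in $L_n/\Q$ divide $nN_E$. For part (i) I invoke the unconditional Lagarias--Odlyzko theorem, with the usual Deuring--Heilbronn treatment of a possible Siegel zero; this is precisely what forces the range $\log x\gg n^{12}\log n$ (the exponent $12$ coming from balancing $|G(n)|\ll n^4$ against the discriminant bound in the error term). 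For part (ii), I invoke the GRH-conditional Chebotarev bound in the form of Serre \cite{Se81}, giving the error $O_E(n^3 x^{1/2}\log(nx))$ after inserting $[L_n:\Q]\ll_E n^4$ and $\log|{\rm disc}(L_n/\Q)|\ll_E n^4\log(nN_E)$.

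\medskip

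\noindent\textbf{Main obstacle.}
The content of the proposition is essentially the book-keeping; the analytic input is identical to \cite[Theorem 3.9]{DW10}. The only delicate point is that the unconditional estimate requires the standard Deuring--Heilbronn argument to rule out, or absorb, a possible exceptional real zero of the Dedekind zeta function of $L_n$, and to produce the clean error $x\exp(-An^{-2}\sqrt{\log x})$ in the stated range. Everything else (the Galois-theoretic factorization, the passage from $C_0(n)$ to $C_r(n)$, and the discriminant bound) is routine once Serre's properties \eqref{serre1}--\eqref{serre3} and the CRT compatibility of $\det,\tr$ are in hand.
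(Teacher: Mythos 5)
Your proposal is correct and follows essentially the same route as the paper: the authors prove the proposition by citing the effective Chebotarev density theorem of Lagarias--Odlyzko as improved by Serre (as stated in \cite[Theorem 3.1]{DW10}), together with Serre's discriminant bounds and Stark's bound on the exceptional zero, referring to \cite{DW10} for details. Your Steps 1 and 2 simply make explicit the Galois-theoretic factorization of $|C_r(n)|/|G(n)|$ that the main term presupposes, and your Step 3 (with the exceptional-zero/Deuring--Heilbronn treatment) matches the paper's analytic input.
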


\begin{proof}
To prove (i) and (ii), one applies the effective Cheboratev Density Theorem due to Lagarias and
Odlyzko \cite{LaOd} and slightly improved by Serre in \cite{Se81}, as stated in \cite[Theorem 3.1]{DW10}
with the appropriate bounds for the discriminants of number fields \cite[Proposition 6]{Se81},
and the bound of Stark \cite{St74} for the exceptional zero of Dedekind $L$-functions for (i).
We refer the reader to \cite{DW10} for more details.
\end{proof}

\begin{remark}
There are many cases where we can improve the error term in Proposition \ref{CDT-bothcases} (ii)
by applying a strategy first used in \cite{Se81} and \cite{MMS88} to reduce to the case of an
extension where Artin's conjecture holds. The error term then becomes
$$
 O_E
\big(n^{3/2}x^{1/2} \log{(n x)}\big).
$$
This can be done if $r=0$ (as in \cite[Theorem 3.9]{DW10}), or if $(n, M_E)=1$ for any $r$.
To apply the strategy of \cite{Se81} and \cite{MMS88} and obtain this improved error
term, one needs to
insure that $C_r(n) \cap B(n) \neq \emptyset$, where $B(n)$ is the Borel subgroup of $\mbox{GL}_2(\Z/n \Z)$.
For example,
this is the case if $E$ is a Serre curve,
and most elliptic curves are Serre curves as it was shown by Jones \cite{JonesSC}.
\end{remark}

We now need upper and lower bounds on the size of the main term of Proposition \ref{CDT-bothcases},
which are computed in the next lemma.

\begin{lemma} \label{lemma-upper-lower-bounds}
Let $E$ be an elliptic curve over $\Q$ without CM.
For all primes $\ell\nmid M_E$ and integers $k\ge 1$, we have the bounds
\begin{equation}\label{usefulinequality2}
\frac{1}{\varphi(\ell^k)} \cdot \frac{\ell-2}{\ell-1}
\le \frac{|C_r(\ell^k)|}{|\GL_2(\Z / \ell^k \Z)|}
\le \frac{1}{\varphi(\ell^k)}
\end{equation}
when $r \not\equiv 0  \,({\rm mod}\,\ell)$, and the bounds
\begin{equation}\label{usefulinequality3}
\frac{1}{\varphi(\ell^k)} \cdot \frac{\ell-2}{\ell-1}
\le \frac{|C_r(\ell^k)|}{|\GL_2(\Z / \ell^k \Z)|}
\le \frac{1}{\varphi(\ell^k)} \bigg(1+\frac{1}{(\ell^3-1)(\ell^2-1)}\bigg)
\end{equation}
when $r \equiv 0  \,({\rm mod}\,\ell)$.
\par

Furthermore, for $m \mid {M_E}^\infty$ such that $|C_r(m)|\not=0$,
we have that
\begin{equation} \label{bound2}
\frac{1}{\varphi(m)} \ll_E \frac{|C_r(m)|}{|G(m)|} \ll_E \frac{1}{\varphi(m)}
\end{equation}
with constants depending only on the elliptic curve $E$.
In particular, the upper bound in \eqref{bound2} holds without the hypothesis $|C_r(m)|\not=0$.
\end{lemma}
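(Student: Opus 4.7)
My plan is to establish the prime-power inequalities \eqref{usefulinequality2}--\eqref{usefulinequality3} by counting lifts under a reduction map, and the mixed-level bound \eqref{bound2} by combining the same lifting technique with property \eqref{serre3}. For the base case $k=1$ of the first pair, I would verify the stated inequalities directly from \eqref{SizeCrelloverGell} together with $\varphi(\ell) = \ell - 1$. For $k \ge 2$, since $G(\ell^k) = \GL_2(\Z/\ell^k\Z)$ by \eqref{serre1}, I would count $|C_r(\ell^k)|$ via the surjection $\pi : \GL_2(\Z/\ell^k\Z) \to \GL_2(\Z/\ell\Z)$ by determining, for each $g_1 \in \GL_2(\Z/\ell\Z)$, how many of its $\ell^{4(k-1)}$ lifts lie in $C_r(\ell^k)$.

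Writing a lift as $g = \tilde g_1 + \ell N$ with $A := \tilde g_1 - I$ and $N \in M_2(\Z/\ell^{k-1}\Z)$, the $2 \times 2$ determinant expansion
\[
\det(g-I) = \det(A) + \ell\,\tr\bigl(\mathrm{adj}(A)\,N\bigr) + \ell^2 \det(N)
\]
suggests splitting by the mod-$\ell$ rank of $A$. If $A$ has rank $2$ (forcing $r \not\equiv 0 \bmod \ell$) or rank $1$ (forcing $r \equiv 0 \bmod \ell$), then $\mathrm{adj}(A) \not\equiv 0 \pmod \ell$, so the linear form $\tr(\mathrm{adj}(A)\,N)$ has a unit coefficient; a single-variable Hensel's lemma (after dividing out the appropriate power of $\ell$) gives exactly $\ell^{3(k-1)}$ valid lifts per $g_1$. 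The remaining case $A \equiv 0 \pmod \ell$, i.e.\ $g_1 = I$, contributes only when $r \equiv 0 \pmod{\ell^2}$; the lifts are then parametrized by $M \in M_2(\Z/\ell^{k-1}\Z)$ with $\det M$ in a prescribed residue class mod $\ell^{k-2}$, and this count $N_{\mathrm{iii}}$ is controlled using $|\mathrm{SL}_2(\Z/\ell^{k-1}\Z)| = \ell^{3(k-1)}(1-1/\ell^2)$ together with the distribution of $\det$ on $M_2$.

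Summing over $g_1$ gives $|C_r(\ell^k)| = \ell^{3(k-1)}\,|C_{r \bmod \ell}(\ell) \setminus \{I\}| + N_{\mathrm{iii}}$, and combined with $|\GL_2(\Z/\ell^k\Z)| = \ell^{4(k-1)} |\GL_2(\Z/\ell\Z)|$, $\varphi(\ell^k) = \ell^{k-1}\varphi(\ell)$, and \eqref{SizeCrell}, this reduces both \eqref{usefulinequality2} and \eqref{usefulinequality3} to short arithmetic verifications. For \eqref{bound2}, since $m \mid M_E^{\infty}$ only involves primes in the finite set dividing $M_E$, both $m/\varphi(m)$ and $\varphi(m)/m$ are bounded in terms of $E$. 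Setting $M := \mathrm{lcm}(m, M_E)$, property \eqref{serre3} gives $|G(M)| = |G(M_E)|(M/M_E)^4$, so $|G(M)|$ has order $m^4$ up to $E$-dependent constants. A prime-by-prime Hensel-lifting argument applied to the projection $\GL_2(\Z/M\Z) \to \GL_2(\Z/M_E\Z)$ yields $|\{g \in G(M) : \det(g-I) \equiv r \pmod m\}| \ll_E |G(M)|/m$, with a matching lower bound when $|C_r(M_E)| \ne 0$ (which is implied by $|C_r(m)| \ne 0$ via further projection). Since the condition $\det(g - I) \equiv r \bmod m$ depends only on $g \bmod m$, this count equals $|C_r(m)| \cdot |G(M)|/|G(m)|$, and \eqref{bound2} follows after dividing by $|G(m)|$ and absorbing $|G(M_E)|$ and $|C_r(M_E)|$ into the implicit constants.

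The main computational obstacle is the case $g_1 = I$ in the first part: the contribution $N_{\mathrm{iii}}$ does not have the clean shape $\ell^{3(k-1)}$ of the generic cases, and one must verify carefully that it fits within the tight correction $\frac{1}{(\ell^3-1)(\ell^2-1)}$ appearing in the upper bound \eqref{usefulinequality3}. The rest of the argument is arithmetic bookkeeping.
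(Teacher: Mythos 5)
Your lifting framework is the same as the paper's; the differences are organisational. For the prime-power case you use the adjugate expansion
$\det(A+\ell N)=\det(A)+\ell\,\tr(\mathrm{adj}(A)N)+\ell^2\det(N)$ and a rank split, where the paper solves entry-by-entry for whichever of $\tilde b,\tilde c,\tilde a-1,\tilde d-1$ is a unit; both correctly give $\ell^{3(k-1)}$ lifts for each $g_1\neq I$, and both must do extra work for $g_1=I$. You flag that the $N_{\rm iii}$ count is the crux but do not compute it; the paper resolves it by stratifying the lifts of $I$ by $v=\min_i v_\ell(k_i)$ and summing the resulting geometric series to get the explicit bound $\ell\cdot\ell^{3(k-1)}\cdot\ell^3/(\ell^3-1)$, which is exactly what produces the correction factor $1+1/((\ell^3-1)(\ell^2-1))$. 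So the first half is a correct plan with an acknowledged but unfilled computation.

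The genuine gap is in the lower bound of \eqref{bound2}. Two separate problems:

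\textbf{(1) The non-vanishing claim is false.} You write that $|C_r(m)|\neq 0$ implies $|C_r(M_E)|\neq 0$ ``via further projection.'' There is no such projection: the congruence defining $C_r(m)$ is mod $m$ while the one defining $C_r(M_E)$ is mod $M_E$, and neither modulus need divide the other. Even taking a $g\in C_r(m)$, lifting it to $G(M)$, and projecting the lift to $G(M_E)$ only gives $\det(h-I)\equiv r\ ({\rm mod}\ \gcd(m,M_E))$, not mod $M_E$. The correct intermediate object is the set of $h\in G(M_E)$ with $\det(h-I)\equiv r\ ({\rm mod}\ m')$ where $m'=\gcd(m,M_E)=\prod_{p\mid m}p^{\min(v_p(m),v_p(M_E))}$, and it is this set (the paper's $C_r(m')$, viewed through $G(M_E)\twoheadrightarrow G(m')$) whose non-emptiness does follow from $|C_r(m)|\neq 0$ by projecting $g$ down the chain $G(m)\to G(m')$. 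The paper therefore lifts from level $m'$ to level $m$, not from $M_E$ to $M$.

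\textbf{(2) The lower bound needs an exclusion you don't make.} Even with the correct starting set, the lift count $p^{3(v_p(m)-v_p(m'))}$ per matrix is only valid for matrices that are \emph{not} congruent to $I$ modulo $p$; a matrix $\equiv I\ ({\rm mod}\ p)$ may have far fewer (or zero) admissible lifts. The paper handles this by restricting to the subset $C_r(m')_{\not\equiv}$ of matrices not congruent to $I$ modulo any $p\mid m$, shows this subset is non-empty under the hypothesis $|C_r(m)|\neq 0$, and applies the generic count \eqref{numberofliftsnotI} only to this subset. Your sketch applies the generic Hensel count to the whole starting set, which is not justified. You would need to incorporate both the correct choice of base level $m'$ and the $\not\equiv I$ restriction to make the lower bound go through.
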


\begin{proof}
Fix $\ell \nmid M_E$ and $k\ge 1$.
To count the number of elements in
$C_r(\ell^k)$, we count
the matrices $\tilde{g}  \in \GL_2(\Z / \ell^k \Z)$ which are the inverse images of a matrix $g \in C_r(\ell)$ under the
projection map from $\GL_2(\Z / \ell^k \Z)$ to $\GL_2(\Z / \ell \Z)$, and
which satisfy
$$
\det(\tilde{g})+1-\tr{(\tilde{g})} \equiv r  \,({\rm mod}\,n).
$$
Let
$$
g=
\begin{pmatrix}
a & b
\\
c & d
\end{pmatrix},
\qquad
\tilde{g} =
\begin{pmatrix}
\tilde{a} & \tilde{b}
\\
\tilde{c} & \tilde{d}
\end{pmatrix}.
$$
If $b \not\equiv 0   \,({\rm mod}\,\ell)$, then $\tilde{b}$ is invertible,
and  we have to count the number of $\tilde{a}, \tilde{b}, \tilde{c}, \tilde{d}$
lifting $a,b,c,d$ such that
$$
\tilde{c} \equiv \frac{\tilde{a} \tilde{d} - (\tilde{a} + \tilde{d}) - r + 1}{\tilde{b}}  \,({\rm mod}\,\ell^k),
$$
and there are $\ell^{3(k-1)}$ such lifts. Similarly if $c \not\equiv 0   \,({\rm mod}\,\ell)$.

If $a \not\equiv 1  \,({\rm mod}\,\ell)$, then $\tilde{a}-1$ is invertible, and
we have to count the number of $\tilde{a}, \tilde{b}, \tilde{c}, \tilde{d}$
lifting $a,b,c,d$ such that
$$
\tilde{d} \equiv \frac{\tilde{a} r + \tilde{b} \tilde{c} -1 + \tilde{a}}{\tilde{a}-1}  \,({\rm mod}\,\ell^k),
$$
and there are $\ell^{3(k-1)}$ such lifts. Similarly if $d \not\equiv 1   \,({\rm mod}\,\ell)$.
This proves \eqref{usefulinequality2} as  the identity matrix does
not belong to $C_r(\ell)$ when $r \not\equiv 0  \,({\rm mod}\,\ell)$.
Then, the number of lifts of any matrix from $C_r(\ell)$ to $C_r(\ell^k)$
is $\ell^{3(k-1)}$, and the number of lifts
from $\GL_2(\Z / \ell \Z)$ to $\GL_2(\Z / \ell^k \Z)$ is $\ell^{4(k-1)}$, which gives
$$
\frac{|C_r(\ell^k)|}{|\GL_2(\Z / \ell^k \Z)|} =  \frac{\ell^{3(k-1)} |C_r(\ell)|}{\ell^{4(k-1)}
|\GL_2(\Z / \ell \Z)|},
$$
and the result follows by using \eqref{SizeCrelloverGell}.

Finally, we have to count the number of lifts
$$
\begin{pmatrix}
1 + k_1 \ell & k_2 \ell
\\
k_3 \ell & 1 + k_4 \ell
\end{pmatrix}
$$
of the identity matrix such that $\ell^2 (k_1 k_4 - k_2 k_3) \equiv r \,({\rm mod}\,\ell^k)$,
where $0 \leq k_i < \ell^{k-1}$.
We assume that $k \geq 2$.
If $r \not\equiv 0 \,({\rm mod}\,\ell^2)$, there are no lifts, and there are $\ell^4$ lifts
if $r \equiv 0 \,({\rm mod}\,\ell^2)$.
Let $v = \min_{i} v_\ell(k_i)$,
where $v_\ell(n)$ is the $\ell$-adic evaluation of $n$,
and write
$k_i  = \ell^v k_i'$ with $0 \leq k_i' < \ell^{k-1-v}$. If $r \not\equiv 0 \,({\rm mod}\,\ell^{2+v})$,
there is no solution with $k_1, k_2, k_3, k_4$ such that $v = \min_{i} v_\ell(k_i)$.
Suppose that $r \equiv 0 \,({\rm mod}\,\ell^{2+v})$. Then we need to solve
$$
\ell^{2+v} (k_1' k_4' - k_2' k_3') \equiv \ell^{2+v} r' \,({\rm mod}\,\ell^{k}) \iff
(k_1' k_4' - k_2' k_3') \equiv   r' \,({\rm mod}\,\ell^{k-2-v}).
$$
Without loss of generality, $v_\ell(k_1')=0$, and
$$
k_4'  \equiv   \frac{r'+k_2'k_3'}{k_1'}  \,({\rm mod}\,\ell^{k-2-v}),
$$
and there are $\ell \ell^{3(k-1-v)}$ solutions $k_1', k_2', k_3', k_4'$.
The number of lifts of the identity matrix is then bounded by
\begin{equation} \label{boundforI}
\ell \sum_{v=0}^{k-2} \ell^{3(k-1-v)}
= \ell \ell^{3(k-1)} \sum_{v=0}^{k-2} \ell^{-3v}
\leq \ell \ell^{3(k-1)} \frac{\ell^3}{\ell^3-1}\cdot
\end{equation}

We now prove \eqref{usefulinequality3}.
Using \eqref{boundforI} and the first formula of \eqref{SizeCrell}, it follows that
\begin{align*}
\frac{\ell^{k-1} |C_r(\ell^k)|}{|\GL_2(\Z / \ell^k \Z)|}
& \leq \frac{|C_r(\ell)|}{\GL_2(\Z / \ell \Z)}
+ \frac{\ell^4/(\ell^3 - 1)}{\GL_2(\Z / \ell \Z)}
\\
& = \frac{(\ell^3-1)(\ell^2-1) + 1}{(\ell-1)(\ell^2-1)(\ell^3-1)}\cdot
\end{align*}
For the lower bound, we have
$$
\frac{\ell^{k-1} |C_r(\ell^k)|}{|\GL_2(\Z / \ell^k \Z)|}
\geq \frac{|C_r(\ell)|-1}{\GL_2(\Z / \ell \Z)}
= \frac{\ell (\ell^2-2)-1}{\ell (\ell-1)(\ell^2-1)}
\geq \frac{\ell-2}{(\ell-1)^2}\cdot
$$

We now prove \eqref{bound2}.
Let $m' = \prod_{p \mid m} p^{\min{(v_p(m), v_p(M_E))}}$,
where $v_p(m)$ is the $p$-adic evaluation of $m$. By (\ref{serre3}), $G(m)$ is the
full inverse image of $G(m')$ under the projection map from $\mbox{GL}_2(\Z/m\Z)$ to $\mbox{GL}_2(\Z/m'\Z)$.
Fix $g \in C_r(m')$, and we now count the number of lifts $\tilde{g}$ in $C_r(m)$. By the
Chinese Remainder Theorem, it suffices to count the number of lifts from  $C_r(p^{v_p(m')})$ to $C_r(p^{v_p(m)})$
for each $p \mid m$. In general, fix $1 \leq e \leq k$, fix $g \in \mbox{GL}_2(\Z/p^e\Z)$
such that $\det(g) + 1 - \tr(g) \equiv r \,({\rm mod}\,p^{e})$, and we count the number of lifts
$\tilde{g} \in \mbox{GL}_2(\Z/p^k\Z)$ such that  $\det(\tilde{g}) + 1 - \tr(\tilde{g}) \equiv r \,({\rm mod}\,p^{k})$.
If $g$ is not congruent to the identity matrix modulo $p$, then the same argument as above shows that there are
\begin{equation} \label{numberofliftsnotI} p^{3(k-e)} \end{equation}
lifts of $g$. If $g$ is congruent to the
identity matrix modulo $p$, we have to count the number of matrices
\begin{eqnarray*}
\tilde{g} = \begin{pmatrix} 1 + k_1 p^e & k_2 p^e \\ k_3 p^e & 1 + k_4 p^e \end{pmatrix}
\end{eqnarray*}
such that $p^{2e} (k_1 k_4 - k_2 k_3) \equiv r \,({\rm mod}\,p^{k})$,
where $0 \leq k_i < p^{k-e}$.
If
$$
r \not\equiv 0 \,({\rm mod}\,\min{(p^{k}, p^{2e})}),
$$
there are no lifts,
and we suppose that  $ r \equiv 0 \,({\rm mod}\,\min{(p^{k}, p^{2e})})$.
Let $v = \min_i{v_p(k_i)}$, and write $k_i = p^v k_i'$ where $0 \leq v < k-e$ and $0 \leq k_i' < p^{k-e-v}$.
The congruence above rewrites as \begin{equation}
\label{newcongruence} p^{2e+v} (k_1' k_4' - k_2' k_3') \equiv r \,({\rm mod}\,p^{k}). \end{equation}
If $2e+v \geq k$, \eqref{newcongruence} has $p^{4(k-e-v)}$ solutions when $ r \equiv 0 \,({\rm mod}\, p^{k})$
and no solutions otherwise.  If $2e + v < k$, assume that $r \equiv 0 \,({\rm mod}\,(p^{2e+v}))$
(otherwise \eqref{newcongruence} has no solutions).
Writing $r = r' p^{2e + v}$, \eqref{newcongruence}
rewrites as $k_1' k_4' - k_2' k_3' \equiv r' \,({\rm mod}\,p^{k-2e-v})$ and this leads to
$p^e p^{3(k-e-v)}$ solutions $k_1', k_2', k_3', k_4'$.
Then, the number of lifts of the
identity matrix from $C_r(p^e)$ to $C_r(p^k)$ is bounded by
\begin{equation}\label{numberofliftsI}
\begin{aligned}
\sum_{\substack{v=0\\ 2e+v < k}}^{k-e-1} p^e p^{3(k-e-v)}
+ \sum_{\substack{v=0\\ 2e+v \geq k}}^{k-e-1}  p^{4(k-e-v)}
& \leq p^{3(k-e)} p^e \frac{p^3}{p^3 -1} +
p^{4e} \frac{p^4}{p^4-1}
\\
& \leq p^{3(k-e)} p^{4e+1} .
\end{aligned}
\end{equation}
Then, applying \eqref{numberofliftsI}, we have that
\begin{align*}
\frac{|C_r(m)|}{|G(m)|}
& \leq \frac{|C_r(m')|}{|G(m')|} \prod_{p \mid m}
\frac{p^{3(v_p(m)-v_p(m'))} p^{4v_p(m')+1}}{p^{4(v_p(m)-v_p(m'))}}
\\
& = \frac{|C_r(m')|}{|G(m')|} \frac{1}{\varphi(m)} \prod_{p \mid m} p^{v_p(m')-1} p^{4v_p(m')+1} (p-1)
\\
& \ll_E \frac{|C_r(m')|}{|G(m')|} \frac{1}{\varphi(m)}\cdot
\end{align*}
Finally we suppose that $|C_r(m)|\not=0$ and prove the lower bound in \eqref{bound2}.
Denoting by $C_r(m')_{\not\equiv}$ the subset of $C_r(m')$ consisting of matrices not equivalent
to the identity matrix modulo $p$ ($C_r(m')_{\not\equiv}$ is not empty since $|C_r(m)|\not=0$),
and applying \eqref{numberofliftsnotI}, we have that
\begin{align*}
\frac{|C_r(m)|}{|G(m)|}
& \geq \frac{|C_r(m')_{\not\equiv}|}{|G(m')|} \prod_{p \mid m}
\frac{p^{3(v_p(m)-v_p(m'))}}{p^{4(v_p(m)-v_p(m'))}}
\\
& = \prod_{p^k \| m} \frac{1}{p^{k-1} (p-1)} \prod_{p \mid m} \frac{(p-1) p^{v_p(m')}}{p} \frac{|C_r(m')_{\not\equiv}|}{|G(m')|}
\\
& \gg_E \frac{1}{\varphi(m)},
\end{align*}
and the lower bound in \eqref{bound2} follows from the last two inequalities.
\end{proof}

\begin{theorem}\label{thm-upperbound}
Let $E$ be an elliptic curve over $\Q$ without CM. Let $r\ge 0$ be an integer,
and let $n=dm$ be any positive integer with $(d, M_E)=1$ and $m\mid {M_E}^\infty$.
\par
{\rm (i)}
We have  that
$$
\left|\left\{ p \leq x \, : \, n_E(p) \equiv r  \,({\rm mod}\,n) \right\}\right|
\ll_E \frac{\Li(x)}{\varphi(n)} + x \exp\Big\{\!- A n^{-2} \sqrt{\log{x}}\Big\}
$$
uniformly for $\log{x} \gg n^{12} \log{n}$, where the implied constants depend only on
the elliptic curve $E$ and $A$ is a positive absolute constant.
\par
{\rm (ii)}
Assuming the GRH for the Dedekind zeta functions of the number fields $L_n/\Q$, we have  that
$$\left|\left\{ p \leq x \, : \, n_E(p) \equiv r  \,({\rm mod}\,n) \right\}\right|
\ll_E \frac{\Li(x)}{\varphi(n)} + n^3 x^{1/2} \log{(nx)}.$$
\par
{\rm (iii)}
Assuming the GRH for the Dedekind zeta functions of the number fields $L_n/\Q$,
we have that
\begin{eqnarray*}
\left|\left\{ p \leq x \, : \, n_E(p) \equiv r  \,({\rm mod}\,n) \right\}\right|
\ll_E \frac{\Li(x)}{\varphi(n)}
\end{eqnarray*}
holds uniformly for $n \leq x^{1/8}/\log x$,
where the implied constant depends only on the elliptic curve $E$.
\par
Further if $r=0$ or $(n, M_E)=1$,
then the condition $n \leq x^{1/8}/\log x$ in the third assertion can be relaxed to $n \leq x^{1/5}/\log x$
and the term $n^3 x^{1/2} \log(nx)$  in the second can be replaced by $n^{3/2} x^{1/2} \log(nx)$.
\end{theorem}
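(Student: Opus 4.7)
The strategy is to combine Proposition \ref{CDT-bothcases} with Lemma \ref{lemma-upper-lower-bounds}: the proposition expresses the counting function as a main term plus a Chebotarev error term, and the lemma lets us absorb the size of $C_r(n)/G(n)$ into $1/\varphi(n)$. First, I would observe that
$$\left|\left\{p \leq x : n_E(p) \equiv r \,({\rm mod}\,n)\right\}\right| = \pi_{C_r(n)}(x, L_n/\Q) + O(\omega(nN_E)),$$
the correction coming only from primes dividing $nN_E$, which is absorbed into the targeted bound in every case.

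Next, I would analyse the main term. Since $(d, M_E)=1$ and every prime divisor of $m$ divides $M_E$, we have $(d,m)=1$, and hence $\varphi(n) = \varphi(d)\varphi(m)$. Applying the upper bounds in \eqref{usefulinequality2}, \eqref{usefulinequality3} and \eqref{bound2} gives
$$\frac{|C_r(m)|}{|G(m)|} \prod_{\ell^k \| d} \frac{|C_r(\ell^k)|}{|\GL_2(\Z / \ell^k \Z)|}
\ll_E \frac{1}{\varphi(m)\varphi(d)} \prod_{\ell \mid d} \left(1 + \frac{1}{(\ell^3 - 1)(\ell^2 - 1)}\right) \ll_E \frac{1}{\varphi(n)},$$
because the Euler product $\prod_\ell \bigl(1 + 1/((\ell^3-1)(\ell^2-1))\bigr)$ converges absolutely. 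Thus Proposition \ref{CDT-bothcases} yields, in either case, a main term bounded by $\Li(x)/\varphi(n)$ up to an $E$-dependent constant.

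With the main term in hand, statements (i) and (ii) follow at once by quoting the two error terms from Proposition \ref{CDT-bothcases}(i) and (ii) respectively. For (iii), I would check that the GRH error term is dominated by $\Li(x)/\varphi(n) \gg x/(n\log x)$ in the specified range: the required inequality $n^3 x^{1/2}\log(nx) \ll x/(n\log x)$ is equivalent to $n^4 \log(nx)\log x \ll x^{1/2}$, and with $n \leq x^{1/8}/\log x$ one gets $n^4 \leq x^{1/2}/(\log x)^4$, which suffices. For the improvement when $r=0$ or $(n, M_E)=1$, I would invoke the refined error term $n^{3/2} x^{1/2}\log(nx)$ from the remark after Proposition \ref{CDT-bothcases}, and repeat the calculation: the condition becomes $n^{5/2}\log(nx)\log x \ll x^{1/2}$, which holds for $n \leq x^{1/5}/\log x$.

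The substantive content is almost entirely packaged in the earlier proposition and lemma; the only non-routine step is the uniform main-term estimate in the second paragraph, where it is essential that the contributions from the ``good'' primes $\ell \nmid M_E$ combine into the factor $1/\varphi(d)$ via an absolutely convergent Euler product, and that the ``bad'' primes dividing $M_E$ contribute at most $1/\varphi(m)$ with an implicit constant depending only on $E$. Once this uniformity is established, the three assertions reduce to elementary arithmetic comparisons between the main term $\Li(x)/\varphi(n)$ and the Chebotarev error term in the respective ranges of $n$.
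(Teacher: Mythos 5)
Your proposal is correct and, for parts (i) and (ii), is essentially the paper's argument: you combine Proposition \ref{CDT-bothcases} with the upper bounds of Lemma \ref{lemma-upper-lower-bounds}, noting (more carefully than the paper does) the $O(\omega(nN_E))$ correction from ramified primes, which is harmless. For part (iii), however, you take a slightly different and more direct route. The paper's proof first handles the degenerate case $|C_r(m)|=0$, and then establishes the \emph{two-sided} bound
$\tfrac{1}{\varphi(n)\log_2 n} \ll_E \tfrac{|C_r(m)|}{|G(m)|}\prod_{\ell^k\|d}\tfrac{|C_r(\ell^k)|}{|\GL_2(\Z/\ell^k\Z)|} \ll_E \tfrac{1}{\varphi(n)}$,
using the lower bounds \eqref{usefulinequality2}, \eqref{usefulinequality3}, \eqref{bound2} together with $\prod_{\ell\mid n}\tfrac{\ell-2}{\ell-1}\gg 1/\log_2 n$, and then argues that the Chebotarev error is dominated by this lower bound for the main term. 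Your approach dispenses with the lower bound entirely: since both the main-term density and the error are shown to be $\ll \Li(x)/\varphi(n)$ in the given range of $n$, the conclusion follows immediately, and the degenerate case $|C_r(m)|=0$ is handled automatically (the main term is then zero, which is certainly $\ll 1/\varphi(n)$). This is a genuine simplification and is entirely valid for proving the stated upper bound; what the paper's version buys is the stronger two-sided information that, when $|C_r(m)|\not=0$ and $n$ is small enough, the count is in fact $\asymp \Li(x)/\varphi(n)$ up to a factor of $\log_2 n$, which is a reassuring sanity check (the main term is not being dwarfed by, or cancelled against, the error) even though only the upper bound is asserted. The relaxed range $n\le x^{1/5}/\log x$ under the extra hypothesis is handled correctly in both arguments by substituting the improved error term $n^{3/2}x^{1/2}\log(nx)$.
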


\begin{proof}
It follows from the estimates of Lemma \ref{lemma-upper-lower-bounds} that
\begin{eqnarray*}
\frac{|C_r(m)|}{|G(m)|} \bigg( \prod_{\ell^k\| d} \frac{|C_r(\ell^k)|}{|\GL_2(\Z / \ell^k \Z)|} \bigg)
\ll_E \frac{1}{\varphi(d)} \frac{1}{\varphi(m)} = \frac{1}{\varphi(n)},
\end{eqnarray*}
and first two statements are obtained by using this upper
bound in the estimates of Proposition \ref{CDT-bothcases} for
$$\pi_{C_r(n)}(x, L_{n}/\Q) =\left|\left\{ p \leq x \, : \, n_E(p) = p + 1 - a_E(p) \equiv r  \,({\rm mod}\,n) \right\}\right|.$$

\par

We now prove (iii).
If $|C_r(m)|=0$, Proposition \ref{CDT-bothcases} implies trivially the required inequality, and we
suppose that $|C_r(m)|\not=0$.
Clearly, it is sufficient to show that
\begin{eqnarray} \label{lowerbound-log}
\frac{1}{\varphi(n) \log_2n}
\ll_E \frac{|C_r(m)|}{|G(m)|} \bigg( \prod_{\ell^k\| d} \frac{|C_r(\ell^k)|}{|\GL_2(\Z / \ell^k \Z)|} \bigg)
\ll_E \frac{1}{\varphi(n)}\cdot
\end{eqnarray}
It follows from Lemma \ref{lemma-upper-lower-bounds}  that
\begin{eqnarray} \label{bound1}
\frac{1}{\varphi(d)} \prod_{\ell\mid d} \frac{\ell-2}{\ell-1}
\le \prod_{\ell^k\| d} \frac{|C_r(\ell^k)|}{|\GL_2(\Z / \ell^k \Z)|}
\ll \frac{1}{\varphi(d)},
\end{eqnarray}
and the lower bound of \eqref{lowerbound-log} follows from \eqref{bound1}, \eqref{bound2} and the estimate
\begin{align*}
\prod_{\ell\mid d} \frac{\ell-2}{\ell-1}
& \ge \prod_{\ell\mid n} \frac{\ell-2}{\ell-1}
\gg \frac{1}{\log_2n}\cdot
\end{align*}
This completes the proof of the Theorem.
\end{proof}

\vskip 10mm

\section{Rosser-Iwaniec's linear sieve formulas}\label{RosserIwaniec}

\smallskip

We state in this section the Rosser-Iwaniec linear sieve \cite[Theorem 1]{Iwaniec80},
which will be used in the proof of Theorem \ref{UB.Pseudoprime.thm}.
It is worth indicating that the Selberg linear sieve \cite[Theorem 8.4]{HR74} cannot be applied for our purpose
since the condition $(\Omega_2(1, L))$ of Selberg's linear sieve (see \cite[page 228]{HR74}) is not satisfied by the function $w_y(\ell)$. But the corresponding condition  $(\Omega_1)$ of the Rosser-Iwaniec's sieve
is satisfied by the $w_y(\ell)$
(see \eqref{X.wy.r} below).

Let ${\mathcal A}$ be a finite sequence of integers and ${\mathcal P}$ a set of prime numbers.
As usual, we write the sieve function
$$
S({\mathcal A}, {\mathcal P}, z)
:= |\{a\in {\mathcal A} : (a, P(z))=1\}|,
$$
where
\begin{equation}\label{defPz}
P(z)
:= \prod_{p<z, \, p\in {\mathcal P}} p.
\end{equation}
Let ${\mathcal B}={\mathcal B}({\mathcal P})$ denote the set of all positive squarefree
integers supported on the primes of ${\mathcal P}$. For each $d\in {\mathcal B}$, define
$$
{\mathcal A}_d:=\{a\in {\mathcal A} : a\equiv 0\,({\rm mod}\,d)\}.
$$
We assume that ${\mathcal A}$ is well distributed over arithmetic progressions $0\,({\rm mod}\,d)$
in the following sense:
There are a convenient approximation $X$ to $|{\mathcal A}|$
and a multiplicative function $w(d)$ on ${\mathcal B}$ verifying
\footnote{Since we need \eqref{defrAd} below only for $d\mid P(z)$, we are freely to define $w(p)=0$ for $p\notin {\mathcal P}$.}
$$
0<w(p)<p
\qquad(p\in {\mathcal P})
\leqno(A_0)
$$
such that
\par
(i) the ``remainders''
\begin{equation}\label{defrAd}
r({\mathcal A}, d)
:= |{\mathcal A}_d| - \frac{w(d)}{d}X
\qquad(d\in {\mathcal B})
\end{equation}
are small on average over the divisors $d$ of $P(z)$;
\par

(ii)
there exists a constant $K\ge 1$ such that
$$
\frac{V(z_1)}{V(z_2)}
\le \frac{\log z_2}{\log z_1}\bigg(1+\frac{K}{\log z_1}\bigg)
\qquad
(2\le z_1<z_2),
\leqno(\Omega_1)
$$
where
$$
V(z)
:= \prod_{p<z} \bigg(1-\frac{w(p)}{d}\bigg).
$$

\smallskip

The next result is the well known theorem of Iwaniec \cite[Theorem 1]{Iwaniec80}.

\begin{lemma}\label{RISieve}
Under the hypotheses $(A_0)$, \eqref{defrAd} and $(\Omega_1)$, we have
$$
S({\mathcal A}, {\mathcal P}, z)
\le XV(z)\{F(s)+E\}+2^{\varepsilon^{-\gamma}}R({\mathcal A}, M, N),
$$
where $0<\varepsilon<\tfrac{1}{8}$,
$s:=(\log MN)/\log z$, $E\ll \varepsilon s^2 {\rm e}^K+\varepsilon^{-8} {\rm e}^{K-s}(\log MN)^{-1/3}$ and
$$
F(s)=\frac{2{\rm e}^\gamma}{s}
\quad(0<s\le 3),
\qquad
V(z) := \prod_{p<z} \bigg(1-\frac{w(p)}{d}\bigg).
$$
The second error term $R({\mathcal A}, M, N)$ has the form
$$
R({\mathcal A}, M, N)
:= \sum_{\substack{m<M, n<N\\ mn\mid P(z)}} a_mb_n r({\mathcal A}, mn),
$$
where the coefficients $a_m, b_n$ are bounded by 1 in absolute value
and depend at most on $M, N, z$ and $\varepsilon$.
\end{lemma}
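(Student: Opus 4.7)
The plan is to follow the classical Rosser--Iwaniec strategy: start from Buchstab's combinatorial identity, iterate it to express $S({\mathcal A},{\mathcal P},z)$ as an alternating sum indexed by squarefree $d\mid P(z)$, and then truncate this sum using Iwaniec's well-factorable weights so that each retained $d$ admits a factorisation $d=mn$ with $m<M$, $n<N$.

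First I would apply Buchstab,
\[
S({\mathcal A},{\mathcal P},z)
= |{\mathcal A}| - \sum_{p<z,\, p\in{\mathcal P}} S({\mathcal A}_p,{\mathcal P},p),
\]
and iterate it, producing upper-bound weights $\lambda_d^+\in\{-1,0,1\}$ of Rosser type; the twist due to Iwaniec is that at each stage the decision to stop the iteration is governed by a size criterion on the running product of primes already selected, chosen precisely so that the surviving $d$ factor in the $(M,N)$-form demanded by $R({\mathcal A},M,N)$. The decomposition
\[
S({\mathcal A},{\mathcal P},z)
\le \sum_{d\mid P(z)} \lambda_d^+ |{\mathcal A}_d|
= X \sum_{d\mid P(z)} \lambda_d^+ \frac{w(d)}{d}
+ \sum_{d\mid P(z)} \lambda_d^+ r({\mathcal A},d)
\]
then separates a main term from a remainder.

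Second, the main term reduces (after Möbius/Euler-factor manipulations) to $XV(z)$ times a partial sum which, by induction on the Buchstab iteration level, satisfies the linear-sieve differential--difference equations. Hypothesis $(\Omega_1)$ is exactly the Mertens estimate in dimension $\kappa=1$ needed to match this arithmetic sum against the sifting function $F$; on the range $0<s\le 3$ the differential equation for $F$ integrates explicitly to $F(s)=2{\rm e}^\gamma/s$. The error $E\ll \varepsilon s^2 {\rm e}^K+\varepsilon^{-8} {\rm e}^{K-s}(\log MN)^{-1/3}$ then bounds the accumulated slack coming from terminating the iteration at finite level (first term) and from replacing the Mertens product by its asymptotic form under $(\Omega_1)$ (second term). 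The remainder, reorganised via the factorisation $d=mn$, is exactly $R({\mathcal A},M,N)$, the coefficients $|a_m|,|b_n|\le 1$ being partial sums of $\pm 1$ Rosser weights; the factor $2^{\varepsilon^{-\gamma}}$ bounds the number of admissible $(M,N)$-ranges created during the $O(\varepsilon^{-1})$ truncation levels.

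The main obstacle is the combinatorial bookkeeping of the Rosser--Iwaniec truncation: one must simultaneously guarantee the correct parity of the retained weights (so as to obtain a genuine upper bound, not a lower one), the well-factorability of each surviving $d$ into $mn$, and the convergence of the arithmetic main term to $XV(z)F(s)$ with the explicit $E$. This delicate parity/factorisation interplay is the substantive content of \cite{Iwaniec80}, so rather than reprove it I would simply invoke that theorem directly; the present lemma is its restatement in the notation used in later sections.
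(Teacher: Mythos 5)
Your proposal arrives at the same endpoint as the paper: the paper does not prove this lemma at all, but simply states it as a restatement of Iwaniec's theorem (\cite[Theorem 1]{Iwaniec80}), exactly as you propose to do in your final sentence. The Buchstab/Rosser-weight sketch you give is a fair summary of what lies behind Iwaniec's result, but it plays no role in the paper itself.
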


\vskip 10mm

\section{Proof of Theorem \ref{UB.Pseudoprime.thm}}

As in \cite{CLS09}, introduce
$$
L:=\prod_{y\le \ell<z} \ell
$$
and
\begin{align*}
{\mathcal S}(x, y, z)
& := \{p\le x \, : \, (n_E(p), L)=1\},
\\
{\mathcal T}(x, y, z)
& := \{p\le x \, : \, (n_E(p), L)>1, \; b^{n_E(p)}\equiv b\,({\rm mod}\, n_E(p))\}.
\end{align*}
Clearly
\begin{equation}\label{PseudoST}
Q_{E, b}(x)
\le |{\mathcal S}(x, y, z)|+|{\mathcal T}(x, y, z)|.
\end{equation}

\smallskip

First we estimate $|{\mathcal S}(x, y, z)|$.

\begin{lemma}\label{UpperBoundS}
Let $E$ be an elliptic curve over $\Q$ without CM and $b\ge 2$ be an integer.
For any $\varepsilon$, there is a constant $y_0=y_0(E, b, \varepsilon)$ such that
\par
{\rm (i)}
We have
\begin{equation}\label{UB.S}
|{\mathcal S}(x, y, z)|
\le ({\rm e}^\gamma  + \varepsilon) \frac{x\log y}{(\log x)\log z}
\end{equation}
uniformly for $y_0\le y\le z\le (\log x)^{1/24}/\log_2x$.

\vskip 1mm
\par
{\rm (ii)}
If we assume the GRH, we have
\begin{equation}\label{UB.S.GRH}
|{\mathcal S}(x, y, z)|
\le ({\rm e}^\gamma + \varepsilon) \frac{x\log y}{(\log x)\log z}
\end{equation}
uniformly for $y_0\le y\le z\le x^{1/10}/(\log x)^4$.
\end{lemma}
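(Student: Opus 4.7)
The plan is to apply the Rosser--Iwaniec linear sieve (Lemma \ref{RISieve}) to $\mathcal{A} := \{n_E(p) : p \le x, \, p \nmid N_E\}$ with sifting set $\mathcal{P} := \{\ell \text{ prime} : y \le \ell < z, \, \ell \nmid M_E\}$ (for $y \ge y_0(E)$, no prime in $[y, z)$ divides $M_E$, so $\mathcal{P}$ really is all of $[y, z)$). Guided by Proposition \ref{CDT-bothcases} applied with $r = 0$ and formula \eqref{SizeCrelloverGell}, I take the approximation $X := \Li(x)$ and the density
$$
w(\ell) := \ell \cdot \frac{|C_0(\ell)|}{|\GL_2(\Z/\ell\Z)|} = \frac{\ell(\ell^2 - 2)}{(\ell-1)^2(\ell+1)}
\qquad (\ell \in \mathcal{P}),
$$
extended multiplicatively to squarefree integers supported on $\mathcal{P}$ and set to $0$ otherwise. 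Condition $(A_0)$ reduces to the elementary inequality $\ell^2 - 2 < (\ell-1)^2(\ell+1)$, which holds for every prime $\ell$.

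A short calculation gives $1 - w(\ell)/\ell = (1 - 1/\ell)(1 + O(\ell^{-2}))$, so by Mertens' theorem
$$
V(z) = \prod_{\substack{y \le \ell < z \\ \ell \in \mathcal{P}}}\left(1 - \frac{w(\ell)}{\ell}\right) = \frac{\log y}{\log z}\left(1 + O\!\left(\frac{1}{\log y}\right)\right),
$$
uniformly in $z \ge y \ge y_0(E)$. This supplies an admissible constant $K = K(E)$ in $(\Omega_1)$. I then choose the sieve level $MN := z^2$, so that $s := \log(MN)/\log z = 2$ and Lemma \ref{RISieve} yields $F(s) = 2{\rm e}^\gamma/s = {\rm e}^\gamma$. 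Taking the sieve parameter (the $\varepsilon$ of Lemma \ref{RISieve}) small and $y_0$ large enough, the main-term contribution satisfies
$$
X\,V(z)\bigl(F(s) + E\bigr) \le ({\rm e}^\gamma + \varepsilon)\frac{x \log y}{(\log x)(\log z)}.
$$

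It remains to control the bilinear remainder, which I estimate trivially by $\sum_{d < z^2} |r(\mathcal{A}, d)|$. For part (i), the hypothesis $z \le (\log x)^{1/24}/\log_2 x$ gives $d^{12}\log d \ll \log x$ for $d \le z^2$, so Proposition \ref{CDT-bothcases}(i) applies, and since $d^{-2}\sqrt{\log x} \ge z^{-4}\sqrt{\log x} \gg (\log_2 x)^4 (\log x)^{1/3}$, the individual remainders $x \exp(-A d^{-2}\sqrt{\log x})$ sum to something far smaller than the main term, which is $\gg x/(\log x)^2$. For part (ii), the improved GRH bound $\ll d^{3/2} x^{1/2}\log(dx)$ (available for $r = 0$ by the remark after Proposition \ref{CDT-bothcases}) sums to $\ll z^5 x^{1/2}\log x \ll x/(\log x)^{19}$ under $z \le x^{1/10}/(\log x)^4$, again negligible.

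The main obstacle is the Mertens-type identity for $V(z)$ needed to produce the sharp constant ${\rm e}^\gamma$: one writes $h(\ell) := (1 - w(\ell)/\ell)/(1 - 1/\ell)$ and verifies the expansion $h(\ell) = 1 - \ell^{-2} + O(\ell^{-3})$, so that $\prod_{\ell \ge y} h(\ell) = 1 + O(1/y)$ converges, and then combines this with the classical Mertens asymptotic for $\prod(1-1/\ell)$. The value $s = 2$ is essentially forced at the upper endpoint of the GRH range, where $z \asymp x^{1/10}/(\log x)^4$ and the largest admissible sieve level is $z^2$: this choice delivers $F(s) = {\rm e}^\gamma$ uniformly across the full range of $z$ in both parts.
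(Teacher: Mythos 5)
Your proof is essentially the same as the paper's: sieve $\{n_E(p)\}_{p\le x}$ with sifting primes $\ge y$ using Iwaniec's linear sieve, take $X=\Li(x)$ and $w(\ell)=\ell|C_0(\ell)|/|\GL_2(\Z/\ell\Z)|$ from Proposition~\ref{CDT-bothcases}, verify $(\Omega_1)$ via the Mertens-type asymptotic for $V(z)$, choose level $z^2$ so $s=2$ and $F(2)=\mathrm{e}^\gamma$, and bound the remainder in both regimes (unconditionally and under GRH) exactly as you do. The only superficial differences are that the paper formally sieves by all primes $\ge y$ rather than just those in $[y,z)$ (equivalent, since $S(\mathcal{A},\mathcal{P},z)$ only sees primes below $z$), and the paper keeps the divisor-count factor $2^{\omega(d)}$ in the remainder sum $R_{\mathcal{S}}$, which you drop; this is harmless since $2^{\omega(d)}\ll_\eps d^\eps$ is negligible against the exponential or power savings in $|r(\mathcal{A},d)|$.
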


\begin{proof}
We shall sieve
$$
{\mathscr A}
:= \{n_E(p) \,:\, p\le x\}
$$
by
$$
{\mathscr P}_y
:= \{p \,: \, p\ge y\}.
$$
By definition,
$
|{\mathcal S}(x, y, z) |
= S({\mathscr A}, {\mathscr P}_y, z)$
for all $1\le y\le z\le x$.

Without loss of generality, we can suppose that $y_0\ge M_E+b$.
Thus we have $(d, M_E)=1$ for all $d\in {\mathcal B}({\mathscr P}_y)$.
Using Proposition \ref{CDT-bothcases} (with the improved error term discussed in the
remark following the proposition under the GRH) and \eqref{SizeCrelloverGell},
we get that
\begin{equation}\label{CardAd}
|{\mathscr A}_d|= \frac{w_y(d)}{d}X + r({\mathscr A}, d)
\end{equation}
for all $d\in {\mathcal B}({\mathscr P}_y)$, with
\begin{equation}\label{X.wy.r}
\begin{aligned}
X
& = \Li(x),
\\\noalign{\vskip 1mm}
w_y(\ell)
& = \frac{\ell(\ell^2-2)}{(\ell - 1)(\ell^2-1)}
\quad(\ell\in {\mathscr P}_y),
\\\noalign{\vskip 1mm}
|r({\mathscr A}, d)|
& \ll_E \begin{cases}
x \,{\rm e}^{-Ad^{-2}\sqrt{\log x}}  & \text{$(d\le (\log x)^{1/12}/\log_2x)$},
\\\noalign{\vskip 1mm}
d^{3/2}x^{1/2}\log(dx)                   & \text{(under the GRH)},
\end{cases}
\end{aligned}
\end{equation}
where $A>0$ is a positive absolute constant.

In order to apply Lemma \ref{RISieve}, we must show that $w_y(\ell)$ satisfies conditions $(A_0)$ and $(\Omega_1)$.
The former is obvious, and
we now check the latter.
Writing
\begin{equation}\label{VyV1}
V_y(z)
:= \prod_{p<z} \bigg(1-\frac{w_y(p)}{p}\bigg)^{-1},
\end{equation}
then
$$
\frac{V_y(z_1)}{V_y(z_2)}
\le \frac{V_1(z_1)}{V_1(z_2)}
$$
for all $z_2>z_1\ge 2$.
On the other hand,
by using the prime number theorem, it follows that
\begin{equation}\label{V1}
\begin{aligned}
V_1(z)
& = \prod_{p<z}
\bigg(1-\frac{w_1(p)}{p}\bigg)
\\
& = \prod_{p<z} \bigg(1-\frac{1}{p}\bigg)
\prod_{p<z} \bigg(1-\frac{p^2-p-1}{(p-1)^3(p+1)}\bigg)
\\
& = \bigg\{1+O\bigg(\frac{1}{\log z}\bigg)\bigg\} \frac{C {\rm e}^{-\gamma}}{\log z},
\end{aligned}
\end{equation}
where $\gamma$ is the Euler constant and
$$
C := \prod_{p} \bigg(1-\frac{p^2-p-1}{(p-1)^3(p+1)}\bigg).
$$
Clearly this implies that for any $2\le z_1<z_2$
\begin{equation}\label{Vz1/Vz2}
\frac{V_1(z_1)}{V_1(z_2)}
= \frac{\log z_2}{\log z_1} \bigg\{1+O\bigg(\frac{1}{\log z_1}\bigg)\bigg\},
\end{equation}
and \eqref{VyV1} and \eqref{Vz1/Vz2} show that the condition $(\Omega_1)$ is satisfied.
Therefore we can apply Lemma \ref{RISieve} to write
\begin{equation}\label{S2}
S({\mathscr A}, {\mathscr P}_y, z)
\le ({\rm e}^\gamma+\varepsilon) XV_y(z) + R_{\mathcal S},
\end{equation}
where
$$
R_{\mathcal S} := \sum_{\substack{d<z^2\\ d\mid P(z)}} 2^{\omega(d)} |r({\mathscr A}, d)|.
$$

In view of the bounds for  $|r({\mathscr A}, d)|$ of \eqref{X.wy.r}, we can deduce that
\begin{equation}\label{S3}
R_{\mathcal S}
\ll x/(\log x)^3
\end{equation}
for all
\begin{equation}\label{Cond.z.1}
z\le \begin{cases}
(\log x)^{1/24}/\log_2x & \text{(unconditionally)},
\\\noalign{\vskip 1,5mm}
x^{1/10}/(\log x)^4        & \text{(under GRH)}.
\end{cases}
\end{equation}

On the other hand,
in view of \eqref{V1}, we have for any $z>y$,
\begin{equation}\label{S4}
V_y(z)
= \frac{V_1(z)}{V_1(y)}
= \bigg\{1+O\bigg(\frac{1}{\log y}\bigg)\bigg\} \frac{\log y}{\log z}\cdot
\end{equation}
Inserting \eqref{S3} and \eqref{S4} into \eqref{S2}, we obtain the required results.
\end{proof}

\medskip

In order to estimate $|{\mathcal T}(x, y, z)|$, we need to prove a preliminary result.
For integers $b\ge 2$ and $d\ge 1$,
denote by ${\rm ord}_d(b)$ the multiplicative order of $b$ modulo $d$
(i.e. the smallest positive integer $k$ with $b^k\equiv 1\,({\rm mod}\,d)$).

\begin{lemma}\label{MulOrder}
For all $t\ge 1$, we have
\begin{align}
\sum_{\ell\ge t} \frac{1}{\ell {\rm ord}_\ell(b)}
& \ll_b \frac{1}{t^{1/2}},
\label{MulOrder1}
\\
\sum_{\ell {\rm ord}_\ell(b)\ge t} \frac{1}{\ell {\rm ord}_\ell(b)}
& \ll_b \frac{1}{t^{1/3}}\cdot
\label{MulOrder2}
\end{align}
\end{lemma}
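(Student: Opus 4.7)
My plan is to handle both bounds by the same scheme: split the summation based on whether $d_\ell:={\rm ord}_\ell(b)$ is small ($d_\ell\le \ell^{1/2}$) or large ($d_\ell>\ell^{1/2}$) compared to $\sqrt{\ell}$. The only arithmetic input needed is the elementary fact that $\ell\mid b^{d_\ell}-1$, which via $\omega(n)\le \log_2 n$ yields
$$\#\{\ell\text{ prime}:d_\ell=d\} \le \omega(b^d-1)\ll_b d,$$
together with the trivial lower bound $\ell\ge d_\ell+1$ coming from $d_\ell\mid \ell-1$.

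For \eqref{MulOrder1}, the large-order regime is immediate: $1/(\ell d_\ell)<\ell^{-3/2}$ and $\sum_{\ell\ge t}\ell^{-3/2}\ll t^{-1/2}$. In the small-order regime I reparametrize by $d$; for fixed $d$ the inner sum runs over primes $\ell\ge\max(t,d^2)$ dividing $b^d-1$, and is bounded by $\omega(b^d-1)/\max(t,d^2)\ll_b d/\max(t,d^2)$. Splitting the outer $d$-sum at the natural threshold $d=t^{1/2}$, where $t$ and $d^2$ balance, gives
$$\sum_{d\le t^{1/2}}\frac{1}{d}\cdot\frac{d}{t}+\sum_{d>t^{1/2}}\frac{1}{d}\cdot\frac{1}{d}\ll \frac{1}{t^{1/2}}.$$

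For \eqref{MulOrder2} I follow the same strategy with the threshold shifted to $d=t^{1/3}$. In the small-order regime the lower bound on $\ell$ becomes $\ell\ge\max(t/d,d^2)$, the two expressions coinciding precisely at $d=t^{1/3}$, and the two resulting pieces $\sum_{d\le t^{1/3}}d/t$ and $\sum_{d>t^{1/3}}d^{-2}$ are each $\ll t^{-1/3}$. The large-order regime requires one extra subdivision: from $d_\ell<\ell$ and $\ell d_\ell\ge t$ I deduce $\ell>t^{1/2}$, and splitting at $\ell=t^{2/3}$ handles the tail via $\ell^{-3/2}$, while the intermediate window $[t^{1/2},t^{2/3})$ is treated by the pointwise estimate $1/(\ell d_\ell)\le 1/t$ combined with $\pi(t^{2/3})\ll t^{2/3}/\log t$, producing a contribution of size $1/(t^{1/3}\log t)$.

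The argument is purely combinatorial; the only mildly delicate point is the balancing of the two regimes in each case, which is exactly what forces the exponents $1/2$ and $1/3$. I do not foresee a serious obstacle — the crude bound $\omega(b^d-1)\ll_b d$ already suffices, and no cyclotomic or Zsygmondy-type refinement will be required.
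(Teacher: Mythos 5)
Your argument is correct, and the core idea — dichotomizing on whether ${\rm ord}_\ell(b)$ is below or above $\ell^{1/2}$ and feeding in the crude count $\#\{\ell : {\rm ord}_\ell(b)=d\}\le\omega(b^d-1)\ll_b d$ — is exactly the engine of the paper's proof (which carries the threshold as a parameter $\ell^{\eta}$ and only sets $\eta=\tfrac12$ at the very end). Where you differ is in the bookkeeping. For the small-order contribution the paper bounds the partial sums $\sum_{\ell\le u,\,{\rm ord}_\ell(b)<\ell^\eta}1/{\rm ord}_\ell(b)\ll u^\eta$ and then invokes partial summation, whereas you reparametrize by $d={\rm ord}_\ell(b)$ and bound each block directly by $d/\max(t,d^2)$ (resp.\ $d/\max(t/d,d^2)$); this is more elementary and makes the balancing at $d=t^{1/2}$ (resp.\ $d=t^{1/3}$) transparent. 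For the large-order contribution in \eqref{MulOrder2} the paper runs a dyadic decomposition in ${\rm ord}_\ell(b)/\ell^{\eta}$, while you instead extract $\ell>t^{1/2}$ from $\ell\,{\rm ord}_\ell(b)\ge t$ and ${\rm ord}_\ell(b)<\ell$, then split $\ell$ at $t^{2/3}$, using $\ell^{-3/2}$ decay on the tail and the pointwise bound $1/(\ell\,{\rm ord}_\ell(b))\le 1/t$ together with prime counting on the middle window. Both routes give the same exponents for the same structural reason; yours avoids both partial summation and the dyadic sum, while the paper's $\eta$-parametrization makes it clearer why $\tfrac12$ and $\tfrac13$ are the optimal outcomes of the split.
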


\begin{proof}
Let $0<\eta<1$ be a parameter to be choosen later.
We have
\begin{equation}\label{Nord}
\sum_{\substack{\ell\\ {\rm ord}_\ell(b)=m}} 1
\le \sum_{\ell\mid (b^m-1)} 1
\le \frac{\log(b^m-1)}{\log 2}
\le \frac{\log b}{\log 2} m.
\end{equation}
Thus
$$
\sum_{\substack{\ell\le u\\ \text{ord}_{\ell}(b)<\ell^{\eta}}} \frac{1}{\text{ord}_{\ell}(b)}
= \sum_{m\le u^{\eta}} \frac{1}{m}
\sum_{\substack{\ell\le u\\ \text{ord}_{\ell}(b)=m}} 1
\le \sum_{m\le u^{\eta}} \frac{\log b}{\log 2}
\ll_{b, \eta} u^{\eta}.
$$
A simple partial summation leads to
$$
\sum_{\substack{\ell\ge t\\ \text{ord}_{\ell}(b)<\ell^{\eta}}} \frac{1}{\ell \text{ord}_{\ell}(b)}
= \int_t^\infty \frac{1}{u} \d \bigg(\sum_{\substack{\ell\le u\\ \text{ord}_{\ell}(b)<\ell^{\eta}}} \frac{1}{\text{ord}_{\ell}(b)}\bigg)
\ll_{b, \eta} \frac{1}{t^{1-\eta}}\cdot
$$
On the other hand, we have trivially
$$
\sum_{\substack{\ell\ge t\\ \text{ord}_{\ell}(b)\ge \ell^{\eta}}} \frac{1}{\ell \text{ord}_{\ell}(b)}
\ll \sum_{\ell\ge t} \frac{1}{\ell^{1+\eta}}
\ll_\eta \frac{1}{t^{\eta}}\cdot
$$
Combining these estimates and taking $\eta=\tfrac{1}{2}$,
we obtain \eqref{MulOrder1}.

Similarly we have
\begin{align*}
\sum_{\substack{\ell \text{ord}_{\ell}(b)\ge t\\ \text{ord}_{\ell}(b)<\ell^{\eta}}} \frac{1}{\ell \text{ord}_{\ell}(b)}
& \le \sum_{\substack{\ell\ge t^{1/(1+\eta)}\\ \text{ord}_{\ell}(b)<\ell^{\eta}}} \frac{1}{\ell \text{ord}_{\ell}(b)}
\ll_{b, \eta} \frac{1}{t^{(1-\eta)/(1+\eta)}},
\\
\sum_{\substack{\ell \text{ord}_{\ell}(b)\ge t\\ \text{ord}_{\ell}(b)\ge \ell^{\eta}}} \frac{1}{\ell \text{ord}_{\ell}(b)}
& = \sum_{k\ge 1}
\sum_{\substack{\ell \text{ord}_{\ell}(b)\ge t\\ 2^{k-1}\ell^{\eta}\le \text{ord}_{\ell}(b)<2^k\ell^{\eta}}}
\frac{1}{\ell \text{ord}_{\ell}(b)}
\\
& \ll \sum_{k\ge 1} \frac{1}{2^k}
\sum_{\ell\ge (2^{-k}t)^{1/(1+\eta)}}
\frac{1}{\ell^{1+\eta}}
\\
& \ll_\eta \frac{1}{t^{\eta/(1+\eta)}}\cdot
\end{align*}
The inequality \eqref{MulOrder2} follows from these estimates with the choice of $\eta=\tfrac{1}{2}$.
\end{proof}

We now estimate $|{\mathcal T}(x, y, z)|$.

\begin{lemma}\label{UpperBoundT}
Let $E$ be an elliptic curve over $\Q$ without CM and $b\ge 2$ be an integer.
Then there is a constant $y_0=y_0(E, b)$ and a positive absolute constant $A$ such that
\par
{\rm (i)}
We have
\begin{equation}\label{UB.T}
|{\mathcal T}(x, y, z)|\ll_{E, b} \Li(x) \frac{\log_2 z}{y^{1/2}} + x \exp\Big\{\!-Az^{-4}\sqrt{\log x}\Big\}
\end{equation}
uniformly for
\begin{equation}\label{Condition.xyz}
y_0\le y<z\le (\log x)^{1/24}/\log_2x.
\end{equation}
\par
{\rm (ii)}
If we assume the GRH, we have
\begin{equation}\label{UB.T.GRH}
|{\mathcal T}(x, y, z)|\ll_{E, b} \Li(x) \frac{\log_2 z}{y^{1/2}} + z^7 x^{1/2}
\end{equation}
uniformly for
\begin{equation}\label{Condition.xyz.GRH}
y_0\le y<z.
\end{equation}
\par
The implied constants depend on $E$ and $b$ only.
\end{lemma}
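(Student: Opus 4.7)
The plan is to reduce membership in ${\mathcal T}(x,y,z)$ to a single congruence on $n_E(p)$ modulo a product of two coprime integers, and then apply Theorem \ref{thm-upperbound} uniformly over the primes $\ell\in[y,z)$, controlling the resulting sum via Lemma \ref{MulOrder}. The crucial observation is that if $p\in{\mathcal T}(x,y,z)$ and $\ell\in[y,z)$ is a prime divisor of $n_E(p)$, then reducing the pseudoprime congruence $b^{n_E(p)}\equiv b\,({\rm mod}\,n_E(p))$ modulo $\ell$ gives $b^{n_E(p)}\equiv b\,({\rm mod}\,\ell)$. Enlarging $y_0$ so that $y_0>b+M_E$ forces $(\ell,b)=(\ell,M_E)=1$, whence $b^{n_E(p)-1}\equiv 1\,({\rm mod}\,\ell)$ and $t_\ell:={\rm ord}_\ell(b)$ divides $n_E(p)-1$. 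This is precisely the correction of the gap in \cite{CLS09} flagged in the introductory footnote. Since $t_\ell\mid\ell-1$ we have $\gcd(\ell,t_\ell)=1$, so by the Chinese Remainder Theorem there is a unique residue $r_\ell\,({\rm mod}\,\ell t_\ell)$ with $n_E(p)\equiv r_\ell\,({\rm mod}\,\ell t_\ell)$, and a union bound over $\ell$ yields
\begin{equation*}
|{\mathcal T}(x,y,z)|\le\sum_{y\le\ell<z}\bigl|\{p\le x:n_E(p)\equiv r_\ell\,({\rm mod}\,\ell t_\ell)\}\bigr|.
\end{equation*}

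I would then apply Theorem \ref{thm-upperbound} to each summand with $n=\ell t_\ell\le z^2$: the restriction \eqref{Condition.xyz} is arranged precisely so that the hypothesis $\log x\gg n^{12}\log n$ of part (i) holds, while under GRH part (ii) applies without size restriction. Using the Mertens-type lower bound $\varphi(\ell t_\ell)=(\ell-1)\varphi(t_\ell)\gg \ell t_\ell/\log_2 z$ (valid because $t_\ell<\ell<z$), the main terms contribute
\begin{equation*}
\sum_{y\le\ell<z}\frac{\Li(x)}{\varphi(\ell t_\ell)}\ll_E\Li(x)\,\log_2 z\sum_{\ell\ge y}\frac{1}{\ell\cdot{\rm ord}_\ell(b)}\ll_{E,b}\Li(x)\cdot\frac{\log_2 z}{y^{1/2}}
\end{equation*}
by \eqref{MulOrder1} of Lemma \ref{MulOrder}, matching the first term of both \eqref{UB.T} and \eqref{UB.T.GRH}. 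Summing the error terms over the $\pi(z)\le z$ primes $\ell<z$ and using $\ell t_\ell\le z^2$ produces $\ll z\cdot x\exp\{-Az^{-4}\sqrt{\log x}\}$ unconditionally and $\ll z^7 x^{1/2}\log(zx)$ under GRH.

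In the unconditional case the range \eqref{Condition.xyz} easily gives $z^5\log z\ll\sqrt{\log x}$, so the extra factor $z$ is absorbed into the exponential after a harmless shrinking of the absolute constant $A$; under GRH the $\log(zx)$ factor is absorbed into the $E,b$-dependent implicit constant. The conceptual core is the divisibility $t_\ell\mid n_E(p)-1$ (rather than $t_\ell\mid n_E(p)$), which yields a single residue class mod $\ell t_\ell$ via CRT and permits a clean application of Theorem \ref{thm-upperbound}. The main quantitative input is the saving $y^{-1/2}$ delivered by Lemma \ref{MulOrder}, which ultimately rests on ${\rm ord}_\ell(b)\gg\ell^{1/2}$ for most primes $\ell$; this is also what forces the $\log_2 z$ loss, since the factor $\varphi(t_\ell)$ in the denominator of the main term is no smaller than $t_\ell/\log_2 z$. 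The main technical obstacle is confirming that the conditions of Theorem \ref{thm-upperbound}(i) are compatible with $n=\ell t_\ell\le z^2$ throughout the allowed range of $z$ in \eqref{Condition.xyz}, which dictates the specific shape $(\log x)^{1/24}/\log_2 x$ of the upper bound on $z$.
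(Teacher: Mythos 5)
Your proof is essentially the paper's argument: reduce to the divisibility $\mathrm{ord}_\ell(b)\mid n_E(p)-1$ (correcting the CLS gap noted in the footnote), combine with $\ell\mid n_E(p)$ via CRT to a single residue class modulo $\ell\,\mathrm{ord}_\ell(b)\le z^2$, apply Theorem~\ref{thm-upperbound}(i)/(ii), lower-bound $\varphi$ by $n/\log_2 n$, and control the main term via \eqref{MulOrder1}. Your explicit remark that the extra factor of $\pi(z)\le z$ in the unconditional error is absorbed by slightly shrinking $A$ is a point the paper glosses over, but otherwise the two arguments coincide step for step.
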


\begin{proof}
If $n_E(p)$ is a pseudoprime to base $b$ and $d\mid n_E(p)$ with $(d, b)=1$, then
$$
d\mid n_E(p)\mid b(b^{n_E(p)-1}-1)
\;\Rightarrow\;
d\mid (b^{n_E(p)-1}-1)
\;\Rightarrow\;
b^{n_E(p)-1}\equiv 1\,({\rm mod}\,d).
$$
Using Fermat's little theorem, it follows that
\begin{equation}\label{MO.Fermat}
n_E(p)\equiv 0\,({\rm mod}\,d),
\quad
n_E(p)\equiv 1\,({\rm mod}\,\text{ord}_d(b)),
\quad
(d, \text{ord}_d(b))= 1.
\end{equation}
By the Chinese remainder theorem, there is an integer $r_{b, d}\in \{1, \dots, d \text{ord}_d(b)\}$ such that
$n_E(p)\equiv r_{b, d}\,({\rm mod}\,d \text{ord}_d(b))$.

Clearly for each $p\in {\mathcal T}(x, y, z)$,
there is a prime $\ell$ such that
\begin{equation}\label{pseudoprime1}
y\le \ell<z,
\qquad
\ell\mid (L, n_E(p))
\qquad\text{and}\qquad
n_E(p)\mid b^{n_E(p)}-b.
\end{equation}
Applying \eqref{MO.Fermat} with $d=\ell$, we have
\begin{align*}
|{\mathcal T}(x, y, z)|
& \le \sum_{y<\ell\le z}
\sum_{\substack{p\le x\\ n_E(p)\equiv r_{b, \ell} ({\rm mod}\,\ell\text{ord}_{\ell}(b))}} 1
\\
& = \sum_{y<\ell\le z} \pi_{C_{r_{b, \ell}}} ( x, L_{\ell\text{ord}_{\ell}(b)}/\Q ).
\end{align*}
Then, using (i) and (ii) of Theorem \ref{thm-upperbound} with the bound $\varphi(n)\gg n/\log_2n$,
we have that
\begin{equation}\label{T1}
|{\mathcal T}(x, y, z)|
\ll_E \Li(x) (\log_2 z) \sum_{y<\ell\le z} \frac{1}{\ell \text{ord}_{\ell}(b)}
+ R_{\mathcal T},
\end{equation}
where
\begin{equation}\label{T7}
\begin{aligned}
R_{\mathcal T}
& := \begin{cases}\displaystyle
\sum_{y<\ell\le z} x \exp\Big\{\!-A\ell^{-4}\sqrt{\log x}\Big\}
& \text{($z \leq (\log{x})^{1/24}/ \log_2{x}$)}
\\\noalign{\vskip 2mm}
\displaystyle
\sum_{y<\ell\le z} \ell^6 x^{1/2}\log(\ell^2 x)
& \text{(under the GRH)}
\end{cases}
\\\noalign{\vskip 1mm}
& \ll \begin{cases}\displaystyle
x \exp\Big\{-Az^{-4}\sqrt{\log x}\Big\}
& \text{($z \leq (\log{x})^{1/24}/ \log_2{x}$),}
\\\noalign{\vskip 1mm}
\displaystyle
z^7 x^{1/2}
& \text{(under the GRH).}
\end{cases}
\end{aligned}
\end{equation}

The required results follow from \eqref{T1}, \eqref{T7} and \eqref{MulOrder1} of Lemma \ref{MulOrder}.
\end{proof}

Taking, in Lemmas \ref{UpperBoundS} and \ref{UpperBoundT}
\begin{align*}
y
& =\begin{cases}
(\log_2 x)^2\log_3 x    & \text{(unconditionally)},
\\\noalign{\vskip 1mm}
(\log x)^2\log_2 x         & \text{(under the GRH)},
\end{cases}
\\
z
& =\begin{cases}
(\log x)^{1/24}/\log_2 x    & \text{(unconditionally)},
\\\noalign{\vskip 1mm}
x^{1/14}/\log x                   & \text{(under the GRH)},
\end{cases}
\end{align*}
which satisfy \eqref{Cond.z.1} and \eqref{Condition.xyz},
and using the bounds of those lemmas in \eqref{PseudoST}, this proves  Theorem \ref{UB.Pseudoprime.thm}.

\vskip 10mm

\section{Proof of Theorem \ref{UB.Pseudoprime.NonCM}}   \label{Section-adapt-Pomerance}

We shall adapt Pomerance's method \cite{Pomerance81} to prove Theorem \ref{UB.Pseudoprime.NonCM}.

We divise the primes $p\le x$ such that $n_E(p)$ is pseudoprimes to base $b$ into four possibly overlapping classes:
\begin{itemize}
\item{$n_E(p)\le x/L(x)$;}

\vskip 0,5mm

\item{
there is $\ell\mid n_E(p)$ with $\text{ord}_\ell(b)\le L(x)$ and $\ell>L(x)^3$;}

\vskip 0,5mm

\item{
there is $\ell\mid n_E(p)$ with $\text{ord}_\ell(b)>L(x)$;}

\vskip 0,5mm

\item{$n_E(p)>x/L(x)$, for all $\ell\mid n_E(p)$, we have $\ell\le L(x)^3$;}
\end{itemize}
and denote by $S_1, \dots, S_4$ the corresponding contribution to $\pi_{E, b}^{\rm pseu}(x)$, respectively.

\vskip 1mm
\goodbreak

{A. \sl Estimate for $S_1$}

\vskip 0,5mm

In view of \eqref{Hasse1}, it follows that
\begin{equation}\label{UBS1}
S_1
\le \sum_{p\le 16x/L(x)} 1
\ll \frac{x}{L(x)}\cdot
\end{equation}

\vskip 1mm

{B. \sl Estimate for $S_2$}

\vskip 0,5mm

Clearly
$$S_2
\le \sum_{\substack{\ell>L(x)^3\\ \text{ord}_\ell(b)\le L(x)}}
\sum_{\substack{p\le x\\ \ell\mid n_E(p)}} 1.
$$
Using (iii) of Theorem \ref{thm-upperbound} with $r=0$ and \eqref{Nord}, we
 deduce that the contribution of
$L(x)^3<\ell\le x^{1/5}/\log x$ to $S_2$ is
$$
\ll_E \sum_{\substack{L(x)^3<\ell\le x^{1/5}/\log x\\ \text{ord}_\ell(b)\le L(x)}} \frac{\Li(x)}{\varphi(\ell)}
\ll_E \frac{x}{L(x)^3} \sum_{\text{ord}_\ell(b)\le L(x)} 1
\ll_{E, b} \frac{x}{L(x)}\cdot
$$
Furthermore,
using Hypothesis \ref{H} with $\delta<\tfrac{1}{5}$, we have
\begin{align*}
\sum_{\substack{x^{1/5}/\log x<\ell \\ \text{ord}_\ell(b)\le L(x)}}
\sum_{\substack{p\le x\\ \ell\mid n_E(p)}} 1
& \le \sum_{\substack{x^{1/5}/\log x<\ell\le 2x\\ \text{ord}_\ell(b)\le L(x)}}
\sum_{m\le 2x/\ell}
\sum_{\substack{p\le x\\ n_E(p)=m\ell}} 1
\\
& \ll_E \sum_{\substack{x^{1/5}/\log x<\ell\le 2x\\ \text{ord}_\ell(b)\le L(x)}}
\sum_{m\le 2x/\ell} (m\ell)^{\delta}
\\\noalign{\vskip -1,5mm}
& \ll_E \sum_{\substack{x^{1/5}/\log x<\ell\le 2x\\ \text{ord}_\ell(b)\le L(x)}}
\frac{x^{1+\delta}}{\ell}
\\
& \ll_{E, b} x^{4/5+\delta} L(x)^3,
\end{align*}
using \eqref{Nord}.

Combining these estimates yields
\begin{equation}\label{UBS2b}
S_2
\ll_{E, b} \frac{x}{L(x)}\cdot
\end{equation}

\vskip 1mm

{C. \sl Estimate for $S_3$}

\vskip 0,5mm

If $p$ is counted in $S_3$,
then there is $\ell\mid n_E(p)$ with ${\rm ord}_\ell(b)>L(x)$
(which implies $\ell>L(x)>b$).
Applying \eqref{MO.Fermat} with $d=\ell$,
there is an integer $r_{b, \ell}\in \{1, \dots, \ell \text{ord}_\ell(b)\}$ such that
$n_E(p)\equiv r_{b, \ell}\,({\rm mod}\,d \text{ord}_d(b))$.
Since $n_E(p)\le p+1+2\sqrt{p}\le 4p\le 4x$, we must have $\ell \text{ord}_\ell(b)\le 4x$.
Thus
\begin{equation}\label{UBS3a}
S_3
\le \sum_{\substack{\ell \text{ord}_\ell(b)\le 4x\\ \text{ord}_\ell(b)>L(x)}}
\sum_{\substack{p\le x\\ n_E(p)\equiv r_{b, \ell} ({\rm mod}\,\ell \text{ord}_\ell(b))}} 1.
\end{equation}
If $\ell\text{ord}_\ell(b)\le x^{1/8}/\log x$, then by Theorem \ref{thm-upperbound}(iii) $$
\sum_{\substack{p\le x\\ n_E(p)\equiv r_{b, \ell} ({\rm mod}\,\ell \text{ord}_\ell(b))}} 1 \ll_E \frac{\Li(x)}{\varphi(\ell \text{ord}_\ell(b))},
$$
and using again the bound $\varphi(n) \gg n / \log_2{n}$, the contribution of those $\ell$ to $S_3$ is bounded by
\begin{eqnarray*}
\sum_{\substack{\ell\text{ord}_\ell(b)\le x^{1/8}/\log x\\ \text{ord}_\ell(b)>L(x)}}
\frac{\Li(x)}{\varphi(\ell \text{ord}_\ell(b))}
& \ll_E & \frac{\Li(x)\log_2x}{L(x)} \sum_{\ell\text{ord}_\ell(b)\le x^{1/8}/\log x} \frac{1}{\ell}
\\
& \ll_E & \frac{\Li(x)(\log_2x)^2}{L(x)}\cdot
\end{eqnarray*}
With the help of Hypothesis \ref{H} with $\delta<\tfrac{1}{24}$
and \eqref{MulOrder2} of Lemma \ref{MulOrder},
the contribution of $x^{1/8}/\log x<\ell\text{ord}_\ell(b)\le 4x$ to $S_3$ is bounded by
\begin{align*}
&  \sum_{x^{1/8}/\log x<\ell\text{ord}_\ell(b)\le 4x}
\sum_{0\le m\le 4x/\ell \text{ord}_\ell(b)}
\sum_{\substack{p\le x\\ n_E(p)=r_{b, \ell}+m\ell \text{ord}_\ell(b)}} 1
\\
& \ll_E \sum_{x^{1/8}/\log x<\ell\text{ord}_\ell(b)\le 4x}
\sum_{0\le m\le 4x/\ell\text{ord}_\ell(b)}
(r_{b, \ell}+m\ell \text{ord}_\ell(b))^{\delta}
\\
& \ll_E \sum_{x^{1/8}/\log x<\ell\text{ord}_\ell(b)\le 4x}
\frac{x^{1+\delta}}{\ell \text{ord}_\ell(b)}
\\\noalign{\vskip 1mm}
& \ll_E x^{1+\delta-1/24}\log x.
\end{align*}

Inserting these estimates into \eqref{UBS3a}, we find that
\begin{equation}\label{UBS3b}
S_3
\ll_E \frac{x}{L(x)}\cdot
\end{equation}

\vskip 1mm

{D. \sl Estimate for $S_4$}

\vskip 0,5mm

In order to adapt the proof of \cite{Pomerance81} to the more general definition \eqref{qnq} of pseudoprimes (which
includes the case where $b$ and $n$ are not coprime), we
write $n_E(p)=n'_E(p) n''_E(p)$ with $n'_E(p) \mid b^\infty$ and $(n''_E(p), b)=1$.
Denote by $S_4'$ and $S_4''$ the contribution of $n'_E(p)>x^{2/3}$ and $n'_E(p) \le x^{2/3}$ to $S_4$, respectively.

By the Hasse bound (formulated as the statement of Hypothesis \ref{H} with $\delta=\tfrac{1}{2}$),
we have
\begin{align*}
S_4'
& \le \sum_{\substack{x^{2/3}<d\le 4x\\ d\mid b^\infty}} \sum_{\substack{m\le 4x/d\\ (m, b)=1}}
\sum_{\substack{p\le x\\ n'_E(p)=d, \, n''_E(p)=m}} 1
\\
& \ll_E \sum_{\substack{x^{2/3}<d\le 4x\\ d\mid b^\infty}} \sum_{m\le 4x/d}
(dm)^{1/2}
\\\noalign{\vskip -1mm}
& \leq \sum_{\substack{x^{2/3}<d\le 4x\\ d\mid b^\infty}}
\frac{x^{3/2}}{d}
\\
& \leq x^{5/6} (\log x)^{b}.
\end{align*}

If $p$ is counted in $S_4''$, then $n''_E(p)>x^{1/3}/L(x)$ and all prime factors of $n''_E(p)$ are $\le L(x)^3$.
Thus $n''_E(p)$ must have a divisor $d$ with
$x^{1/18}<d\le x^{1/17}$ and $(d, b)=1$.
Thus, by the comment following \eqref{MO.Fermat},   $n_E(p)\equiv r_{b, d} \, ({\rm mod}\,d \text{ord}_d(b))$
for some residue $r_{b,d}$, and by
Theorem \ref{thm-upperbound}, we have
\begin{align*}
S_4''
& \le \sum_{\substack{x^{1/18}<d\le x^{1/17}\\ (d, b)=1}}
\sum_{\substack{p\le x\\ n_E(p)\equiv r_{b, d} ({\rm mod}\,d \text{ord}_d(b))}} 1
\\
& \ll_E \sum_{x^{1/18}<d\le x^{1/17}} \frac{x}{d \text{ord}_d(b)}
\\\noalign{\vskip 1mm}
& \leq x \sum_{m\le x^{1/17}} \frac{1}{m}
\sum_{\substack{x^{1/18}<d\le x^{1/17}\\ \text{ord}_d(b)=m}}
\frac{1}{d}\cdot
\end{align*}
With the help of the following inequality (see \cite[Theorem 1]{Pomerance81})
$$
\sum_{\substack{d\le t\\ {\rm ord}_d(b)=m}} 1
\le \frac{t}{\sqrt{L(t)}}
\quad
(t\ge t_0(b), \, m\ge 1),
$$
a simple partial integration allows us to deduce that
\begin{align*}
\sum_{\substack{x^{1/18}<d\le x^{1/17}\\ \text{ord}_d(b)=m}} \frac{1}{d}
& = \int_{x^{1/18}}^{x^{1/17}} \frac{1}{t} \d \Big(\sum_{\substack{d\le t\\ \text{ord}_d(b)=m}} 1\Big)
\ll \frac{1}{L(x)^{1/37}},
\end{align*}
and $S''_4 \ll_E x (\log{x}) L(x)^{-1/37}$.
Thus
\begin{equation}\label{UBS4}
S_4     = S'_4 + S''_4
\ll_{E, b} \frac{x}{L(x)} + \frac{x\log x}{L(x)^{1/37}}
\le \frac{x}{L(x)^{1/38}}\cdot
\end{equation}

The statement of Theorem \ref{UB.Pseudoprime.NonCM} then follows from \eqref{UBS1}, \eqref{UBS2b}, \eqref{UBS3b} and \eqref{UBS4}.

\vskip 10mm

\section{Proof of Theorem \ref{UB.Pseudoprime.CM}}

First write
\begin{align*}
\pi_{E, b}^{\rm pseu}(x)
& = \sum_{\substack{p\le x\\ \text{$n_E(p)$ is pseudoprime to base $b$}}} 1
\\
& \le \sum_{\substack{n\le 4x\\ \text{$n$ is pseudoprime to base $b$}}} M_E(n).
\end{align*}
By using the Cauchy-Schwarz inequality, it follows that
\begin{equation}\label{CM1}
\begin{aligned}
\pi_{E, b}^{\rm pseu}(x)
& \le \Big(\pi_b^{\rm pseu}(4x) \Big)^{1/2} \Big( \sum_{n\le 4x} M_E(n)^2\Big)^{1/2}.
\end{aligned}
\end{equation}
To bound the second sum on the right-hand side of \eqref{CM1}, we use a result of Kowalski \cite{Kowalski06}
who proved that for a curve $E$ with complex multiplication  and for any $\varepsilon > 0$,
\begin{equation}\label{CM2}
\sum_{n\le 4x} M_E(n)^2
\ll \frac{x}{(\log x)^{1-\varepsilon}}.
\end{equation}
We remark that in \cite{Kowalski06}, there are no curves with complex multiplication defined over
$\Q$ as the field of complex multiplication must be included in the field of definition of
the elliptic curve. Then, \eqref{CM2} is first proven for the sequence
$\left\{ n_E(\frakp) = \# E(\F_\frakp) \right\}$
associated to $E$, where $\frakp$ runs over the primes of the CM field \cite[Theorem 5.4]{Kowalski06}.
This first result can then be used to deduce the upper bound \eqref{CM2} by separating the rational
primes into ordinary and supersingular primes of $E$, and by using \cite[Theorem 5.4]{Kowalski06} to obtain
\eqref{CM2} (see \cite[Proposition 7.4]{Kowalski06}).

Theorem \ref{UB.Pseudoprime.CM} then follows by replacing  \eqref{CM2} and \eqref{Pomerance1} in \eqref{CM1}.

\vskip 10mm

\end{document}